\newtheorem{theorem}{Theorem}[section]
\newtheorem{lemma}{Lemma}[section]
\newtheorem{cor}{Corollary}[section]
\numberwithin{equation}{section}
\theoremstyle{definition}
\theoremstyle{remark}
\begin{document}
\title{On Some Double-Series Inequalities}
\author{Peng Gao}
\address{Division of Mathematical Sciences, School of Physical and Mathematical Sciences,
Nanyang Technological University, 637371 Singapore}
\email{penggao@ntu.edu.sg}
\subjclass[2000]{Primary 47A30} \keywords{Double–-series
inequalities, Hardy inequalities}


\begin{abstract}
  We study certain double-–series
  inequalities, which are motivated by weighted Hardy
  inequalities.
\end{abstract}

\maketitle
\section{Introduction}
\label{sec 1} \setcounter{equation}{0}

  Let $p>0$ and $l^p$ be the space of all complex sequences ${\bf a}=(a_n)_{n \geq 1}$ satisfying:
\begin{equation*}
  \|{\bf a}\|_p=\Big(\sum^{\infty}_{i=1}|a_i|^p \Big )^{1/p}<\infty.
\end{equation*}
   A matrix $A=(a_{n,k})$ is said to be a weighted
    mean matrix if its entries satisfy:
\begin{equation*}
    a_{n,k}=\lambda_k/\Lambda_n,  ~~ 1 \leq k \leq
    n; \hspace{0.1in} \Lambda_n=\sum^n_{i=1}\lambda_i, \lambda_i \geq 0, \lambda_1>0.
\end{equation*}
   For fixed $p>1$, the $l^{p}$ operator norm of $A$ is defined as the $p$-th root of
   the best possible
   constant $U_p$ satisfying:
\begin{align}
\label{1.1}
  \sum^{\infty}_{n=1}\Big{|}\frac {1}{\Lambda_n} \sum^n_{k=1}\lambda_ka_k\Big{|}^p \leq
  U_p\sum^\infty_{n=1}|a_n|^p,
\end{align}
   where the estimate is to hold for all complex
   sequences ${\bf a}$. When $\lambda_k=1$ for all $k$ and $U_p=(p/(p-1))^p$, inequality \eqref{1.1} becomes the celebrated
   Hardy's inequality (\cite[Theorem 326]{HLP}).

   By the duality principle \cite[Lemma 2]{M} for the norms of linear operators,
   inequality \eqref{1.1} is equivalent to the following
\begin{align}
\label{1.2}
  \sum^{\infty}_{n=1}\Big{|}\lambda_n \sum^{\infty}_{k=n}\frac {a_k}{\Lambda_k}\Big{|}^p \leq
  U^{p/q}_q\sum^\infty_{n=1}|a_n|^p,
\end{align}
   where $q=p/(p-1)$.

   From now on we restrict our attention to all non-negative sequences $(a_n)$. Similar to inequality \eqref{1.2}, one can also study the
   following inequality (or its reverse) for various $p$:
\begin{align}
\label{1.3}
    \sum^{\infty}_{n=1}\Big (\frac
   {1}{\Lambda_n}\sum^{\infty}_{k=n}\lambda_ka_k\Big )^p \leq
U_p\sum^{\infty}_{n=1}a^p_n.
\end{align}

   When $0<p<1$ and $\lambda_k=1$ for all $k$, the reversed inequality
   \eqref{1.3} becomes the one studied in Theorem
   345 of \cite{HLP}. The best possible constant $U_p$ in this case is
   not yet known for all $0<p<1$. For studies in this direction,
   we refer the reader to the references \cite[Theorem 61]{L&S}
   and \cite{G9}.

   For fixed $p$, it is interesting to compare the right-hand side expressions
   in \eqref{1.2} and \eqref{1.3}. When $\lambda_k=1$ for all $k$, one has the following result of Bennett and Grosse-Erdmann \cite[Corollary 3]{BGE1}:
\begin{align*}
  \sum^{\infty}_{n=1}\Big (\frac
   {1}{n^{\beta}}\sum^{\infty}_{k=n}a_k\Big )^p \leq
   \frac {1}{1-\beta p}\sum^{\infty}_{n=1}\Big{(}\sum^{\infty}_{k=n}\frac
   {a_k}{k^{\beta}}\Big{)}^p.
\end{align*}
   Here $0<p \leq 1, 0\leq \beta < 1/p$ and the constant is
   best possible.

   More generally, for given matrices $A, B$, one can consider
   inequalities of the type
\begin{align}
\label{1.4}
   \|B{\bf x}\|_p \leq K(p) \|A{\bf x}\|_p,
\end{align}
   where $K(p)$ is a constant, and the estimate is to hold for all non-negative sequences ${\bf x}=(x_n)$. When neither $A$ nor $B$ is a diagonal matrix,
we refer to inequality \eqref{1.4} as double–-series inequality.
The double--series inequalities are studied in \cite{BGE1} and
\cite{Be2}.

  In this paper, we focus on the study of double--series inequalities given in the following form:
\begin{equation}
\label{1}
   \left ( \sum^{\infty}_{n=1} \left ( \sum^{\infty}_{k=n}b_nc_kx_k\right )^p \right )^{1/p} \leq
   K(p,q)\left ( \sum^{\infty}_{n=1} \left ( \sum^{\infty}_{k=1}a_{n,k}x_k\right )^q \right
   )^{1/q}.
\end{equation}
    Here $(a_{n,k}), (b_n)$ and $(c_n)$ are given non-negative
    sequences, $0<p, q < \infty$ are fixed parameters. The estimate is to hold for all non-negative sequences ${\bf x}$. The
    constant $K(p, q)$ is independent of ${\bf x}$.

   We denote $e^{(1)}=(1, 0, 0, \ldots), e^{(2)}=(0,1, 0, \ldots)$ and so on. In Section \ref{sec 3}, we prove the following
\begin{theorem}
\label{thm2} Suppose that $a_{n,k}/c_k$ increases with $k$ for any
fixed $n \geq 1$, then for $p \geq 1, 0<q \leq p$, inequality
\eqref{1} holds for non-negative sequences $x=(x_k)$ if and only
if it holds for $x=e^{(n)} (n=1, 2, \ldots)$. The theorem
continues to hold when $0<p \leq 1, q \geq p$ provided that
inequality \eqref{1} is reversed.
\end{theorem}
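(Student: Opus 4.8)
The ``only if'' direction is immediate: specializing \eqref{1} to $x=e^{(n)}$ for each $n$ is a subset of the required estimates. For the ``if'' direction it is convenient to clear the factor $c_k$ by setting $y_k=c_kx_k$ and $d_{n,k}=a_{n,k}/c_k$, so that the monotonicity hypothesis reads simply: $d_{n,k}$ increases in $k$ for each fixed $n$. Writing $Y_n=\sum_{k\geq n}y_k$ (a nonincreasing tail sum), the left-hand side of \eqref{1} becomes $\big(\sum_n b_n^pY_n^p\big)^{1/p}$ and the right-hand side becomes $\big(\sum_n(\sum_k d_{n,k}y_k)^q\big)^{1/q}$. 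Testing on $x=e^{(m)}$ then reduces, after the factor $c_m$ cancels, to the clean scalar inequalities
\[ B_m^{1/p}\leq K(p,q)\,D_m^{1/q},\qquad B_m:=\sum_{n=1}^{m}b_n^p,\quad D_m:=\sum_{n}d_{n,m}^q . \]
By homogeneity and a standard truncation argument it suffices to prove \eqref{1} for finitely supported $y$, so all sums below are finite.

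When $0<q\leq 1$ the estimate follows without using monotonicity at all. Indeed, Minkowski's inequality ($p\geq 1$) gives $\big(\sum_n b_n^pY_n^p\big)^{1/p}\leq\sum_m y_m B_m^{1/p}$, and the reverse Minkowski inequality for the $\ell^q$ quasi-norm ($q\leq 1$) gives $\big(\sum_n(\sum_k d_{n,k}y_k)^q\big)^{1/q}\geq\sum_m y_m D_m^{1/q}$; combining these with the scalar inequality above yields \eqref{1} with the same constant.

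The substantive range is $1\leq q\leq p$, and here the monotonicity hypothesis is essential, since the two Minkowski steps above point the wrong way once $q>1$. The plan is to keep the two sides coupled rather than to split them. Abel summation turns the left-hand side into $\sum_n b_n^pY_n^p=\sum_n B_n\big(Y_n^p-Y_{n+1}^p\big)$, and the scalar bound $B_n\leq K(p,q)^pD_n^{p/q}$ gives
\[ \sum_n b_n^pY_n^p\;\leq\;K(p,q)^p\sum_n D_n^{p/q}\big(Y_n^p-Y_{n+1}^p\big). \]
On the other side, writing $R_{j,n}=\sum_{k\geq n}d_{j,k}y_k$ and $\widetilde R_n=\sum_j R_{j,n}^q$, the target $\big(\sum_n(\sum_k d_{n,k}y_k)^q\big)^{p/q}=\widetilde R_1^{\,p/q}$ telescopes as $\sum_n\big(\widetilde R_n^{\,p/q}-\widetilde R_{n+1}^{\,p/q}\big)$. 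Thus everything reduces to the per-level inequality
\[ \widetilde R_n^{\,p/q}-\widetilde R_{n+1}^{\,p/q}\;\geq\;D_n^{p/q}\big(Y_n^p-Y_{n+1}^p\big)\qquad(\text{each }n). \]

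I expect this per-level inequality to be the main obstacle, and it is exactly where monotonicity enters. Fix $n$ and set $A_j=R_{j,n+1}$; since $d_{j,k}\geq d_{j,n}$ for $k\geq n+1$, one has $A_j\geq d_{j,n}Y_{n+1}$. Viewing the left-hand side as the function $F(A)=\big(\sum_j(A_j+d_{j,n}y_n)^q\big)^{p/q}-\big(\sum_j A_j^q\big)^{p/q}$ of the vector $(A_j)$, a derivative computation shows that $\partial F/\partial A_j$ has the sign of $Q^{p/q-1}(A_j+d_{j,n}y_n)^{q-1}-P^{p/q-1}A_j^{q-1}$, where $P\leq Q$ are the two inner sums; this is $\geq 0$ because $p/q\geq 1$ and $q\geq 1$. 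Hence $F$ is coordinatewise nondecreasing and is minimized over the admissible region $\{A_j\geq d_{j,n}Y_{n+1}\}$ at the corner $A_j=d_{j,n}Y_{n+1}$, where a direct evaluation gives precisely $D_n^{p/q}(Y_n^p-Y_{n+1}^p)$. Summing over $n$ and comparing with the Abel-summed left-hand side completes this range. Finally, the reversed statement for $0<p\leq 1,\ q\geq p$ is obtained by running the same two arguments with every inequality reversed: for $q\geq 1$ one uses the reverse Minkowski inequality on the now super-additive $\ell^p$ side together with ordinary Minkowski on the $\ell^q$ side, while for $q\leq 1$ one repeats the telescoping argument, where now $p/q\leq 1$ and $q\leq 1$ flip the sign of $\partial F/\partial A_j$ and hence reverse the per-level inequality.
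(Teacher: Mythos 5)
Your proof is correct, but it takes a genuinely different route from the paper's. The paper makes the substitution $y_n=\sum_{k\ge n}c_kx_k$, so that $(y_n)$ is nonincreasing and $x_n=(y_n-y_{n+1})/c_n$, and thereby rewrites \eqref{1} as a weighted Hardy-type inequality over nonincreasing sequences whose transformed matrix entries $a_{n,k}/c_k-a_{n,k-1}/c_{k-1}$ are nonnegative precisely because of the monotonicity hypothesis; the equivalence with testing on $e^{(n)}$ is then read off from the cited result of Bennett and Bennett--Grosse-Erdmann (Theorem \ref{thm3}), which is invoked as a black box. You instead prove the implication from scratch: after the same normalization $y_k=c_kx_k$, $d_{n,k}=a_{n,k}/c_k$, you split into $0<q\le 1$ (Minkowski on the $\ell^p$ side, reverse Minkowski on the $\ell^q$ side --- interestingly needing no monotonicity at all) and $1\le q\le p$ (Abel summation on the left, telescoping of $\widetilde R_n^{\,p/q}$ on the right, and a coordinatewise corner-minimization of $F$ in which monotonicity enters only through the constraint $A_j\ge d_{j,n}Y_{n+1}$), with the reversed case handled by flipping every inequality. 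In effect you re-derive the relevant cases of Bennett's ``lower bounds for matrices'' theorem, which is exactly the content of Theorem \ref{thm3}; your per-level inequality $\widetilde R_n^{\,p/q}-\widetilde R_{n+1}^{\,p/q}\ge D_n^{p/q}(Y_n^p-Y_{n+1}^p)$ plays the role that the nonincreasing-sequence reduction plays in the paper. What each buys: the paper's argument is a few lines and inherits all parameter ranges at once from the cited theorem; yours is self-contained, makes visible exactly where monotonicity is used, and shows as a bonus that it is superfluous when $q\le 1\le p$ (and symmetrically in the reversed case). Two points you leave implicit are routine but deserve a sentence: the corner-minimization involves infinitely many coordinates $A_j$, so it should be carried out for finite truncations in $j$ and passed to the limit by monotone convergence; and when $q<1$ the partial derivative of $F$ blows up at $A_j=0$, so the monotone-path argument there should be phrased via one-sided derivatives on the open segment together with continuity at the endpoint.
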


    When $p=q$, a special case of Theorem \ref{thm2} is proved in \cite[Lemma 8]{BGE1} while the general case is proved in
    \cite[Lemma 4]{Be2}.

    Motivated by various choices for the $\lambda_k$'s in \eqref{1.3}, we apply Theorem \ref{thm2} in Section \ref{sec 4} to
    determine the best possible constant $K(p)$ with $0<p<1$ in \eqref{1.4} for
    various $A$ and $B$.

\section{Lemmas}
\label{sec 2} \setcounter{equation}{0}
   In this section we list a few lemmas that are needed in the proofs of our results in Section \ref{sec 4}.
   We first note the following lemma of Levin and Ste\v ckin
  \cite[Lemma 1-2, p.18]{L&S}:
\begin{lemma}
\label{lem0}
    For an integer $n \geq 1$,
\begin{eqnarray}
\label{4}
    \sum^n_{i=1}i^r &\geq & \frac {1}{r+1}n(n+1)^r, \hspace{0.1in} 0 \leq r \leq 1, \\
\label{201}
   \sum^n_{i=1}i^r  &\geq & \frac {r}{r+1}\frac
   {n^r(n+1)^r}{(n+1)^r-n^r}, \hspace{0.1in} r \geq 1.
\end{eqnarray}
    Inequality \eqref{4} reverses when $r \geq 1$ or $-1<r \leq 0$. Inequality \eqref{201} reverses when $-1 <r \leq 1$.
\end{lemma}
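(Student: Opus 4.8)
The plan is to prove both \eqref{4} and \eqref{201} by induction on $n$, the inductive step in each case reducing to a pointwise inequality whose direction is fixed by the convexity or concavity of $t\mapsto t^r$; this is precisely why each estimate reverses on the complementary range of $r$. In both cases the base case $n=1$ is a one-variable inequality in $r$ that I would verify directly, noting the equalities at the endpoints $r=0,1$ that make the constants sharp.

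For \eqref{4}, adding $(n+1)^r$ to the inductive hypothesis $\sum_{i=1}^n i^r\ge\frac{1}{r+1}n(n+1)^r$ and factoring out $(n+1)^r$ reduces the step, after dividing through, to
\[
1+rt\ge(1+t)^r,\qquad t=\tfrac{1}{n+1}.
\]
This is the tangent-line (Bernoulli) inequality: for $0\le r\le1$ the map $t\mapsto(1+t)^r$ is concave and lies below its tangent at $t=0$, which gives \eqref{4}; for $r\ge1$ or $-1<r\le0$ it is convex, so the inequality and hence \eqref{4} reverse. The base case $1\ge 2^r/(r+1)$ reverses on the same ranges for the identical reason.

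The genuine obstacle is \eqref{201}. Setting $a=(m-1)^r,\ b=m^r,\ c=(m+1)^r$ for consecutive integers and writing $c_m=\frac{r}{r+1}\frac{m^r(m+1)^r}{(m+1)^r-m^r}$, a direct manipulation shows that the inductive step $c_m-c_{m-1}\le m^r$ is equivalent to
\[
(c-b)(b-a)\ge r\,(b^2-ac).
\]
Dividing by $b^2$ and setting $A=m/(m-1)$, $B=(m+1)/m$ recasts this as $(A^r-1)(B^r-1)\ge r(A^r-B^r)$, where the exact identity $(A-1)(B-1)=A-B$ holds and forces equality at $r=1$. This step is genuinely two-parameter (it depends on both $m$ and $r$), and controlling it is the crux. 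I would fix $m$ and analyze $G(r):=(A^r-1)(B^r-1)-r(A^r-B^r)$: one checks $G(0)=G'(0)=0$ and $G(1)=0$, so $r=0$ is a double root and $r=1$ a simple one, while log-concavity of $t\mapsto t^r$ gives $b^2\ge ac$ for $r>0$ (with the sign reversing for $r<0$). Establishing that $G$ changes sign only at $r=1$—for instance by pinning down the sign of $G''$ or a suitable monotonicity of $G$—then yields $G\ge0$ for $r\ge1$ and $G\le0$ for $-1<r\le1$. Feeding this back into the induction, together with the base case $1\ge\frac{r}{r+1}\frac{2^r}{2^r-1}$ and its reversal, proves \eqref{201} and its reverse.
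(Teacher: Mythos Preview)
The paper does not give its own proof of this lemma; it is quoted verbatim from Levin--Ste\v ckin \cite[Lemmas 1--2, p.~18]{L&S}. So there is no in-paper argument to compare against, only your proposal to assess.

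Your treatment of \eqref{4} is correct and complete: the induction step reduces exactly to Bernoulli's inequality $1+rt\gtrless(1+t)^r$ with $t=1/(n+1)$, whose direction is governed by the convexity/concavity of $(1+t)^r$, and the base case $1\gtrless 2^r/(r+1)$ follows from the concavity of $r\mapsto r+1-2^r$ on $\mathbb R$ with zeros at $r=0,1$.

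For \eqref{201} your reduction is also correct. With $a=(m-1)^r$, $b=m^r$, $c=(m+1)^r$ one indeed has
\[
c_m-c_{m-1}\le m^r\quad\Longleftrightarrow\quad (c-b)(b-a)\ge r\,(b^2-ac),
\]
and after dividing by $b^2$ this becomes $G(r):=(A^r-1)(B^r-1)-r(A^r-B^r)\ge 0$ with $A=m/(m-1)$, $B=(m+1)/m$; the identity $(A-1)(B-1)=A-B$ does force $G(1)=0$, and $G(0)=G'(0)=0$ is immediate. The gap is the last sentence. You cannot close the argument ``by pinning down the sign of $G''$'': since $G$ is nontrivial with a double zero at $0$ and a simple zero at $1$, $G''$ must change sign on $(0,1)$. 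Concretely, for $m=2$ one has $G''(0)=2(\ln 2\cdot\ln\tfrac32-\ln 2+\ln\tfrac32)<0$ while $G'(1)>0$, so $G''$ is negative near $0$ and positive somewhere before $1$. Thus neither a global sign of $G''$ nor monotonicity of $G$ is available; what is needed is a finer factorisation. The Taylor expansion in $t=1/m$,
\[
G(r)=\frac{r^2(r^2-1)}{12}\,t^4+O(t^6),
\]
suggests studying $G(r)/\bigl(r^2(r-1)\bigr)$ (or an equivalent quotient) and proving it has a fixed sign for all $t\in(0,\tfrac12]$, but you have not supplied such an argument. Until this step is carried out, the proof of \eqref{201} and its reversal is incomplete.
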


\begin{lemma}
\label{lem1}
    For $s>r>-1$, $n \geq 1$,
\begin{equation}
\label{4.1}
    \frac {\sum^{n}_{i=1}(i/n)^{r}}{\sum^{n}_{i=1}(i/n)^{s}} <
    \frac {1+s}{1+r}.
\end{equation}
   The constant is best possible.
\end{lemma}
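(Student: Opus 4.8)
The plan is to prove the inequality \eqref{4.1} by relating both sums to integrals and exploiting the monotonicity bounds supplied by Lemma \ref{lem0}. First I would rewrite the claim in the equivalent cross-multiplied form $(1+r)\sum_{i=1}^n(i/n)^r < (1+s)\sum_{i=1}^n(i/n)^s$, so that the target is to show the function $r \mapsto (1+r)\sum_{i=1}^n(i/n)^r$ is strictly increasing in $r$ on $(-1,\infty)$ for each fixed $n \geq 1$. Equivalently, one wants $(1+r)n^{-r}\sum_{i=1}^n i^r$ to be strictly increasing in $r$. The natural route is to differentiate this quantity with respect to $r$ and show the derivative is positive, which reduces to an inequality comparing $\sum_{i=1}^n i^r \log(i/n)$ against a multiple of $\sum_{i=1}^n i^r$.

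An alternative and probably cleaner approach that I would pursue first is to bound each sum directly against the exact integral $\int_0^1 t^r\,dt = 1/(1+r)$. The idea is that $\sum_{i=1}^n (i/n)^r$ behaves, after dividing by $n$, like a Riemann sum for $\int_0^1 t^r\,dt$, so I expect $(1+r)\sum_{i=1}^n(i/n)^r$ to sit slightly below $n$ for each $r$, with the gap controlled by the same comparison-to-integral estimates that Lemma \ref{lem0} packages. Concretely, I would try to show $\sum_{i=1}^n i^r < \tfrac{1}{r+1}\big((n+1)^{r+1} - 1\big)$ or a similar two-sided bracketing for all $r>-1$, using the convexity/monotonicity of $t \mapsto t^r$; then the ratio in \eqref{4.1} is squeezed between explicit expressions whose quotient is bounded by $(1+s)/(1+r)$.

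The inequalities in Lemma \ref{lem0} are tailored to exactly this purpose: \eqref{4} gives $\sum_{i=1}^n i^r \geq \tfrac{1}{r+1}n(n+1)^r$ for $0\le r\le 1$ with the reverse for $r\ge 1$ and for $-1<r\le 0$, while \eqref{201} supplies the complementary bound. So the concrete strategy is to split into the relevant ranges of $r$ and $s$ and, in each range, bound the numerator sum from above and the denominator sum from below by the matching estimates from Lemma \ref{lem0}, arranging the pieces so that the factor $n^r$ against $n^s$ and the constants $1/(r+1)$ against $1/(s+1)$ combine to yield precisely the bound $(1+s)/(1+r)$. The main obstacle will be handling the case $r > 1$ (where both \eqref{4} and \eqref{201} reverse in different ways) and ensuring the inequality stays strict rather than degenerating to equality: since equality in the integral comparison is only approached as $n\to\infty$, I expect strictness to hold for every finite $n$, and I would verify that the best-possible claim follows by letting $n\to\infty$, where both sums divided by $n$ tend to $1/(1+r)$ and $1/(1+s)$ respectively, forcing the ratio $\to (1+s)/(1+r)$.
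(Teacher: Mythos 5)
Your reformulation (prove that $r\mapsto(1+r)\sum_{i=1}^n(i/n)^r$ is strictly increasing) and your sharpness argument (let $n\to\infty$) agree with the paper, but the core of your preferred strategy has a structural flaw: one \emph{cannot} prove \eqref{4.1} by bounding the numerator from above and the denominator from below by separate explicit expressions. The reason is that \eqref{4.1} degenerates to equality as $s\to r^+$ at every \emph{fixed} $n$ (both sides tend to $1$). So if $\Phi(r,n)\geq\sum_{i=1}^n(i/n)^r$ and $\Psi(s,n)\leq\sum_{i=1}^n(i/n)^s$ are any continuous-in-the-parameter bounds with $\Phi(r,n)/\Psi(s,n)\leq(1+s)/(1+r)$, letting $s\downarrow r$ forces $\Phi(r,n)\leq\Psi(r,n)$, i.e.\ both bounds must coincide exactly with the sum --- no genuine bracketing survives. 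Concretely, even the sharpest bounds in Lemma \ref{lem0} fail: at $n=1$, $r=3/2$, $s=8/5$, the reversed \eqref{4} gives numerator $\leq 2^{3/2}/(5/2)\approx 1.13$ while \eqref{201} gives denominator $\geq\approx 0.92$, so the bracketed ratio is $\approx 1.23$, far above the target $(1+s)/(1+r)=1.04$ (the lemma itself holds trivially there, the ratio being $1$). Your proposed integral bracket fares no better: combining $\sum_{i=1}^n i^r<\frac{1}{r+1}\bigl((n+1)^{r+1}-1\bigr)$ with $\sum_{i=1}^n i^s>\frac{1}{s+1}n^{s+1}$ would require $(n+1)^{r+1}\leq n^{r+1}+1$, which is false for all $r>0$. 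Your first sketch (differentiate in $r$) is exactly equivalent to the lemma, but the derivative inequality $\sum_{i=1}^n(i/n)^r\bigl(1+(1+r)\ln(i/n)\bigr)\geq 0$ it reduces to is precisely the degenerate limiting case and is as hard as the lemma; you offer no proof of it.

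The paper supplies the two ideas your proposal is missing. It sets $f(n)=(1+s)\sum_{i=1}^n i^s-(1+r)n^{s-r}\sum_{i=1}^n i^r$ and \emph{telescopes in $n$}: showing $f(n+1)-f(n)\geq 0$ reduces to $\sum_{i=1}^n i^r\leq(s-r)(n+1)^s\big/\bigl[(1+r)\bigl((n+1)^{s-r}-n^{s-r}\bigr)\bigr]$. Then, since this right-hand side increases in $s>r$, it suffices to verify the limit $s\to r^+$, namely inequality \eqref{3.1}, $\sum_{i=1}^n i^r\leq(n+1)^r/\bigl((1+r)\ln(1+1/n)\bigr)$, which \emph{does} follow from the various cases of \eqref{4} and \eqref{201}; strictness comes for free from $f(n)\geq f(1)=s-r>0$. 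The point to absorb is that differencing in $n$ \emph{before} applying any bounds converts the degenerate family of inequalities into a single non-degenerate one with enough room for Lemma \ref{lem0} to close it --- exactly the maneuver your separate-bracketing plan cannot replicate.
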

\begin{proof}
     Upon letting $n \rightarrow \infty$, one sees easily that the constant is best possible.  To prove inequality \eqref{4.1},
     we rewrite it as
\begin{equation*}
   f(n):= (1+s)\sum^{n}_{i=1}i^s-(1+r)n^{s-r}\sum^{n}_{i=1}i^r >0.
\end{equation*}
    Note that
\begin{equation*}
    f(n+1)-f(n)=(s-r)(n+1)^s-(1+r)\left((n+1)^{s-r}-n^{s-r} \right )\sum^{n}_{i=1}i^r.
\end{equation*}
    We want to show the above expression is non-negative, which is
    amount to showing
\begin{equation*}
    \sum^{n}_{i=1}i^r \leq \frac {(s-r)(n+1)^s}{(1+r)\left((n+1)^{s-r}-n^{s-r} \right
    )}.
\end{equation*}
   For fixed $r$, it's easy to see that the right-hand side expression above is an
   increasing function of $s>r$ so that it suffices to show
\begin{equation}
\label{3.1}
    \sum^{n}_{i=1}i^r \leq \lim_{s \rightarrow r^+}\frac {(s-r)(n+1)^s}{(1+r)\left((n+1)^{s-r}-n^{s-r} \right
    )}=\frac {(n+1)^r}{(1+r)\ln (1+1/n)}.
\end{equation}
   As it's easy to see inequality \eqref{3.1} follows from
    various cases of inequalities \eqref{4} or \eqref{201}, it follows that $f(n) \geq f(1)=s-r >0$ for all $n \geq
    1$ and this completes the proof.
\end{proof}

\begin{lemma}
\label{lem2}
    For $n \geq 2$,
\begin{align*}
 \frac {\sum^{n-1}_{i=1}(i/n)^{r}}{\sum^{n-1}_{i=1}(i/n)^{s}}  \left\{\begin{array}{ll}
\leq 2^{s-r} & {\mbox{ if }}  s>r\geq 1,   \\
 < \frac {1+s}{1+r} & {\mbox{ if }}  0>s>r>-1.
\end{array}\right.
\end{align*}
  The constants are best possible.
\end{lemma}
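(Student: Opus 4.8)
The two ranges sit at opposite ends of the same ratio
\[
h(n):=\sum_{i=1}^{n-1}(i/n)^{r}\Big/\sum_{i=1}^{n-1}(i/n)^{s},
\]
and I would treat them by different devices. In the first range $h$ is largest at $n=2$, where $h(2)=2^{s-r}$ exactly, so the constant $2^{s-r}$ is attained and hence best possible. In the second range the sums are Riemann sums, so $h(n)\to\big(\int_0^1x^r\,dx\big)\big/\big(\int_0^1x^s\,dx\big)=(1+s)/(1+r)$ as $n\to\infty$, while staying strictly below it; this gives both the strict inequality and best possibility. In either case the first step is to clear the (positive) denominator and multiply by $n^s$.

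For $s>r\ge 1$ this reduces the claim to
\[
\sum_{i=1}^{n-1}i^{s}\ \ge\ \Big(\tfrac{n}{2}\Big)^{s-r}\sum_{i=1}^{n-1}i^{r}.
\]
I would obtain this by pairing the index $i$ with $n-i$: summing the two-variable estimate
\[
u^{s}+v^{s}\ \ge\ \Big(\tfrac{u+v}{2}\Big)^{s-r}\big(u^{r}+v^{r}\big),\qquad u,v>0,
\]
over $(u,v)=(i,n-i)$ for $1\le i\le n-1$ (so $u+v=n$) and dividing by $2$ yields exactly the displayed inequality. This two-variable estimate is the crux and I expect it to be the main obstacle. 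By homogeneity I may normalise $u+v=2$ and put $u=1-w,\ v=1+w$ with $w\in[0,1)$, reducing it to $\psi(s)\ge\psi(r)$ where $\psi(p):=(1-w)^{p}+(1+w)^{p}$; it then suffices that $\psi$ be nondecreasing on $[1,\infty)$. Now $\psi'(p)\ge 0$ is equivalent to $\big((1+w)/(1-w)\big)^{p}\ge -\ln(1-w)/\ln(1+w)$, whose left side increases in $p$, so the inequality for all $p\ge1$ follows from the case $p=1$, namely $(1+w)\ln(1+w)+(1-w)\ln(1-w)\ge 0$. The latter holds since its left side vanishes at $w=0$ and has derivative $\ln\tfrac{1+w}{1-w}\ge 0$. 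Equality at $w=0$ corresponds to $u=v$, i.e. to $n=2$, recovering the extremal case.

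For $0>s>r>-1$ I would instead reduce to Lemma \ref{lem1}. Set $A=\sum_{i=1}^{n}(i/n)^{r}$ and $B=\sum_{i=1}^{n}(i/n)^{s}$; deleting the $i=n$ terms (each equal to $1$) gives $h(n)=(A-1)/(B-1)$. Since $0<i/n\le1$ and $s<0$, every summand of $B$ is $\ge1$, so $B\ge n>1$ and $B-1>0$. Thus $h(n)<(1+s)/(1+r)$ is equivalent to $(1+s)(B-1)-(1+r)(A-1)>0$, i.e. to
\[
(1+s)B-(1+r)A>s-r .
\]
The key observation is that $(1+s)B-(1+r)A=n^{-s}f(n)$, where
\[
f(n)=(1+s)\sum_{i=1}^{n}i^{s}-(1+r)n^{s-r}\sum_{i=1}^{n}i^{r}
\]
is precisely the function used in the proof of Lemma \ref{lem1}, where it is shown that $f(n)\ge f(1)=s-r>0$. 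Because $s<0$ forces $n^{-s}>1$ for $n\ge2$, we get $n^{-s}f(n)>f(n)\ge s-r$, which is exactly the required strict inequality. It is here that the hypothesis $s<0$ is used: for $s>0$ one would have $n^{-s}<1$ and the step $n^{-s}f(n)>f(n)$ would reverse, which explains why the clean constant $(1+s)/(1+r)$ persists only in the stated range.
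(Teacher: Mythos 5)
Your proof is correct, and in both cases it takes a genuinely different route from the paper's. For $s>r\geq 1$, the paper sets $g(n)=2^{s-r}\sum_{i=1}^{n-1}i^s-n^{s-r}\sum_{i=1}^{n-1}i^r$ and proves $g(n+1)-g(n)\geq 0$ by a monotonicity-and-limit analysis of the auxiliary function $h(u;v)=((2v)^u-1)/(1-v^u)$ (including a concavity argument for $p(u;w)$), ultimately invoking the Levin--Ste\v ckin estimate of Lemma \ref{lem0} for $r\geq 1$; your symmetrization $i\leftrightarrow n-i$ replaces all of this by the single two-point inequality $u^s+v^s\geq \left((u+v)/2\right)^{s-r}(u^r+v^r)$, verified via monotonicity of $\psi(p)=(1-w)^p+(1+w)^p$ on $[1,\infty)$ --- elementary, self-contained, and with the equality case $n=2$ visible as $u=v$, whereas the paper's telescoping only records $g(2)=0$. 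For $0>s>r>-1$, the paper runs a fresh induction on $n$ using $q(u;v)=u/(1-v^u)$ and the reversed Levin--Ste\v ckin bound; you instead note that deleting the $i=n$ terms makes the claim equivalent to $(1+s)B-(1+r)A>s-r$, identify $(1+s)B-(1+r)A=n^{-s}f(n)$, and conclude from $f(n)\geq f(1)=s-r>0$ and $n^{-s}>1$. This is shorter and explains structurally why the constant $(1+s)/(1+r)$ persists exactly when $s<0$. One point to flag: your second case uses the bound $f(n)\geq f(1)$, which is stronger than the statement of Lemma \ref{lem1} (that gives only $f(n)>0$) and lives inside its proof; since the paper establishes precisely this monotonicity of $f$, the appeal is legitimate, but it couples your argument to the paper's particular proof of Lemma \ref{lem1} rather than to the lemma as stated.
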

\begin{proof}
    We first consider the case $s>r\geq 1$.
   Note that the case $n=2$ implies the constant here is best possible. To prove the
   corresponding inequality, we rewrite it as:
\begin{equation*}
   g(n):= 2^{s-r}\sum^{n-1}_{i=1}i^s-n^{s-r}\sum^{n-1}_{i=1}i^r >0.
\end{equation*}
    Note that
\begin{equation*}
    g(n+1)-g(n)=2^{s-r}n^s-(n+1)^{s-r}n^r-\left((n+1)^{s-r}-n^{s-r} \right )\sum^{n-1}_{i=1}i^r.
\end{equation*}
    We want to show the above expression is non-negative, which is
    amount to showing
\begin{equation}
\label{4.5}
    \sum^{n-1}_{i=1}i^r \leq \frac {2^{s-r}n^s-(n+1)^{s-r}n^r}{(n+1)^{s-r}-n^{s-r}}
    =n^rh\left (s-r;\frac {n}{n+1} \right ),
\end{equation}
   where
\begin{align*}
   h(u;v)=\frac {(2v)^u-1}{1-v^u}.
\end{align*}
    Note that for $u>0, 1/2 \leq v<1$, we have
\begin{align*}
    \frac {\partial h}{\partial u}=\frac {v^u}{(1-v^u)^2}p(u; \ln v).
\end{align*}
   where
\begin{align*}
   p(u; w)=\left (2^u-(2e^w)^u \right )\ln 2+w \left (2^{u}-1 \right ).
\end{align*}
    One sees easily that $p(u;w)$ is a concave function of $w$ for
    fixed $u$ and it follows that $p(u;\ln v) \geq \min (p(u;-\ln
    2), p(u;0))=0$ for $1/2 \leq v<1$.
   We then deduce that in order to establish inequality \eqref{4.5} for $s > r \geq 1$, it
    suffices to show that
\begin{align*}
    \sum^{n-1}_{i=1}i^r \leq \lim_{s \rightarrow r^+}n^rh\left (s-r;\frac {n}{n+1} \right )
    =n^r\left (-1+\frac {\ln 2}{\ln (1+\frac {1}{n})} \right
    ).
\end{align*}
   As the above inequality is an easy consequence of the case $r
   \geq 1$ of inequality \eqref{4}, we see that we have
   $g(n+1)-g(n) \geq 0$ for all $n \geq 2$ and $g(2)=0$, it
   follows that $g(n) \geq 0$ for all $n \geq 2$ and this
   completes the proof for the case $s>r\geq 1$.

   Next, we consider the case $0>s>r>-1$. Upon letting $n \rightarrow
   \infty$, one sees that the constant here is best possible. We prove the
   corresponding inequality by induction. When $n=2$, the inequality follows
    easily from the fact that the function $r \mapsto (1+r)2^{-r}$
    is an increasing function of $-1<r<0$.

    Suppose now the corresponding inequality holds for some $n$ with $n
    \geq 2$, then we have
\begin{equation*}
   \frac {\sum^{n}_{i=1}(i/(n+1))^{r}}{\sum^{n}_{i=1}(i/(n+1))^{s}}
   < (n+1)^{s-r}
    \frac {(\frac
    {1+s}{1+r})n^{r-s}\sum^{n-1}_{i=1}i^{s}+n^r}{\sum^{n}_{i=1}i^{s}}.
\end{equation*}
    It suffices to show that the right-hand side expression above
    is $< \frac {1+s}{1+r}$, which is equivalent to the following
\begin{equation}
\label{4.7'}
    \sum^{n-1}_{i=1}i^s < \frac {n^s}{1+s}\left (-1-s+q\left (s-r;\frac {n}{n+1} \right ) \right
    ),
\end{equation}
   where
\begin{align*}
   q(u;v)=\frac {u}{1-v^u}.
\end{align*}
    It's easy to see that for fixed $0<v<1$, $q(u;v)$ is an
    increasing function of $u>0$.
   It follows that in order to establish inequality \eqref{4.7'} for $0>s>r>-1$, it
    suffices to show that
\begin{align*}
    \sum^{n-1}_{i=1}i^s < \lim_{r \rightarrow s^-}\frac {n^s}{1+s}\left (-1-s+h\left (s-r;\frac {n}{n+1} \right ) \right
    )=\frac {n^s}{1+s}\left (-1-s+\frac {1}{\ln (1+\frac {1}{n})} \right
    ).
\end{align*}
   We now note the reversed inequality \eqref{4} valid for
   $-1<r \leq 0$ implies that
\begin{align*}
    \sum^{n-1}_{i=1}i^s \leq \frac {(n-1)n^s}{1+s}.
\end{align*}
   Thus, it remains to show that
\begin{align*}
    \frac {(n-1)n^s}{1+s} < \frac {n^s}{1+s}\left (-1-s+\frac {1}{\ln (1+\frac {1}{n})} \right
    ).
\end{align*}
   The above inequality is easily seen to be valid by noting that
   $-1<s<0$ and this completes the proof for the case $0>s>r>-1$.
\end{proof}

\begin{lemma}[{\cite[Lemma 3.1]{G7}}]
\label{lem3}
   Let $\{B_n \}^{\infty}_{n=1}$ and $\{C_n \}^{\infty}_{n=1}$ be strictly increasing positive sequences with
   $B_1/B_2 \leq C_1 / C_2$. If for any integer $n \geq 1$,
\begin{equation*}
  \frac {B_{n+1}-B_n}{B_{n+2}-B_{n+1}} \leq  \frac
  {C_{n+1}-C_n}{C_{n+2}-C_{n+1}}.
\end{equation*}
  Then $B_{n}/B_{n+1} \leq C_{n} / C_{n+1}$ for any integer $n \geq 1$.
\end{lemma}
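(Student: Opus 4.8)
The plan is to prove the equivalent statement that the ratio sequence $r_n := B_n/B_{n+1}$ is dominated termwise by $s_n := C_n/C_{n+1}$, i.e.\ $r_n \le s_n$ for all $n \ge 1$, which is exactly the desired conclusion $B_n/B_{n+1} \le C_n/C_{n+1}$. Since both sequences are strictly increasing and positive, every $r_n, s_n$ lies in $(0,1)$, so all the denominators appearing below are positive. I would argue by induction on $n$, the base case $n=1$ being precisely the hypothesis $B_1/B_2 \le C_1/C_2$, i.e.\ $r_1 \le s_1$.

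The key step is to recast the difference-quotient hypothesis in terms of consecutive ratios. Dividing numerator and denominator by $B_{n+1}$, one checks the identity
\[
   \frac{B_{n+1}-B_n}{B_{n+2}-B_{n+1}} = (1-r_n)\,\frac{r_{n+1}}{1-r_{n+1}},
\]
and similarly for the $C$-sequence with $r$ replaced by $s$. Thus the hypothesis at index $n$ reads
\[
   (1-r_n)\,\frac{r_{n+1}}{1-r_{n+1}} \;\le\; (1-s_n)\,\frac{s_{n+1}}{1-s_{n+1}}.
\]

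For the inductive step, assume $r_n \le s_n$. Then $1-r_n \ge 1-s_n > 0$, and since $r_{n+1}/(1-r_{n+1})>0$ we may chain
\[
   (1-s_n)\,\frac{r_{n+1}}{1-r_{n+1}} \;\le\; (1-r_n)\,\frac{r_{n+1}}{1-r_{n+1}} \;\le\; (1-s_n)\,\frac{s_{n+1}}{1-s_{n+1}},
\]
where the first inequality is the monotonicity in the factor $(1-r_n)$ and the second is the recast hypothesis. Dividing through by $1-s_n > 0$ gives $r_{n+1}/(1-r_{n+1}) \le s_{n+1}/(1-s_{n+1})$, and because $t \mapsto t/(1-t)$ is strictly increasing on $(0,1)$ this yields $r_{n+1}\le s_{n+1}$, completing the induction.

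The argument is mostly bookkeeping once the difference quotient is rewritten in terms of $r_n$, so I do not expect a genuine obstacle; the one place requiring care is ensuring the induction hypothesis $r_n \le s_n$ is inserted at exactly the right factor $(1-r_n)$ so that, after cancelling $1-s_n$, the remaining inequality involves only $r_{n+1}$ and $s_{n+1}$. The positivity and strict monotonicity of the sequences are used throughout to guarantee that all ratios lie in $(0,1)$ and that the auxiliary map $t/(1-t)$ is increasing.
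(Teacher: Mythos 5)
Your proof is correct, but note that the paper itself contains no proof of this statement: it is quoted verbatim from \cite[Lemma 3.1]{G7}, so your argument supplies a self-contained proof where the paper gives only a citation. The induction is sound in every detail: dividing numerator and denominator of $(B_{n+1}-B_n)/(B_{n+2}-B_{n+1})$ by $B_{n+1}$ does yield $(1-r_n)\,r_{n+1}/(1-r_{n+1})$ with $r_n=B_n/B_{n+1}$; the base case is literally the hypothesis $B_1/B_2\le C_1/C_2$; and in the inductive step the chain
\begin{equation*}
(1-s_n)\,\frac{r_{n+1}}{1-r_{n+1}} \le (1-r_n)\,\frac{r_{n+1}}{1-r_{n+1}} \le (1-s_n)\,\frac{s_{n+1}}{1-s_{n+1}}
\end{equation*}
uses only $0<1-s_n\le 1-r_n$ (from the inductive hypothesis and strict monotonicity of the sequences, which also place all ratios in $(0,1)$) and the recast hypothesis, after which cancelling $1-s_n>0$ and invoking the strict increase of $t\mapsto t/(1-t)$ on $(0,1)$ closes the induction. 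This is essentially the natural induction on consecutive ratios that one would expect behind the cited lemma; the same bookkeeping can alternatively be run on the cross-multiplied form $B_nC_{n+1}\le B_{n+1}C_n$ without introducing the auxiliary map $t/(1-t)$, but your version has the advantage of making transparent exactly where each hypothesis (positivity, strict monotonicity, the base-case ratio condition, and the difference-quotient condition) enters.
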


\begin{lemma}
\label{lem4}
    For $1\leq s<r<1/p$,
\begin{equation}
\label{4.11}
    \frac {\sum^{n}_{k=1}(r\sum^{k}_{i=1}i^{r-1})^{-p}}{\sum^{n}_{k=1}(s\sum^{k}_{i=1}i^{s-1})^{-p}}
 < \frac {1-sp}{1-rp}n^{(s-r)p}.
\end{equation}
  The constant $(1-sp)/(1-rp)$ is best possible.
\end{lemma}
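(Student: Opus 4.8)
The plan is to clear denominators and argue by induction on $n$. Writing
\[
  D_n=\sum^{n}_{k=1}\Big(r\sum^{k}_{i=1}i^{r-1}\Big)^{-p},\qquad
  E_n=\sum^{n}_{k=1}\Big(s\sum^{k}_{i=1}i^{s-1}\Big)^{-p},
\]
inequality \eqref{4.11} is equivalent to $(1-rp)D_n<(1-sp)n^{(s-r)p}E_n$. I abbreviate $\sigma_t(k)=t\sum^{k}_{i=1}i^{t-1}$, so that $D_n=\sum_{k\le n}\sigma_r(k)^{-p}$ and $E_n=\sum_{k\le n}\sigma_s(k)^{-p}$. For the base case $n=1$ one has $\sigma_r(1)=r$, $\sigma_s(1)=s$, and the assertion reads $(1-rp)r^{-p}<(1-sp)s^{-p}$; this is exactly the statement that $x\mapsto x^{p}/(1-xp)$ is increasing on $[1,1/p)$, which follows from a one-variable computation (its logarithmic derivative equals $p(1+x-xp)/\big(x(1-xp)\big)$, and $1+x-xp>x>0$ since $xp<1$).

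For the inductive step, assume the inequality for $n$. Since $D_{n+1}=D_n+\sigma_r(n+1)^{-p}$ and $E_{n+1}=E_n+\sigma_s(n+1)^{-p}$, applying the inductive hypothesis to $(1-rp)D_n$ reduces the case $n+1$ to
\[
  (1-sp)\big(n^{(s-r)p}-(n+1)^{(s-r)p}\big)E_n
  \le (1-sp)(n+1)^{(s-r)p}\sigma_s(n+1)^{-p}-(1-rp)\sigma_r(n+1)^{-p}.
\]
Because $(s-r)p<0$, the coefficient of $E_n$ on the left is positive, so this is an upper bound for $E_n$, and one checks the right-hand side is positive as well. I would establish it by feeding in the Levin--Ste\v ckin estimates of Lemma \ref{lem0}: bounding $\sigma_s(k)$ from below term by term (via \eqref{4} when $1\le s\le 2$ and \eqref{201} when $s\ge 2$) yields an explicit upper bound for $E_n$, while bounding $\sigma_s(n+1)$ from above and $\sigma_r(n+1)$ from below controls the right-hand side. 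After these substitutions the claim collapses to a single inequality in $n$, to be verified by the same reduction-to-\eqref{4}/\eqref{201} mechanism used in Lemmas \ref{lem1} and \ref{lem2}.

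The constant is best possible: since $\sigma_t(k)\sim k^{t}$ and $\sum^{n}_{k=1}k^{-tp}\sim n^{1-tp}/(1-tp)$ for $tp<1$, one gets $D_n\sim n^{1-rp}/(1-rp)$ and $E_n\sim n^{1-sp}/(1-sp)$, so that $D_n/E_n\sim \tfrac{1-sp}{1-rp}\,n^{(s-r)p}$. Hence $\big(D_n/E_n\big)n^{(r-s)p}\to (1-sp)/(1-rp)$, and no smaller constant can serve in \eqref{4.11}.

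I expect the inductive step to be the main obstacle, precisely because it is asymptotically sharp: both sides of the displayed inequality behave like $(r-s)p\,n^{-rp}$ as $n\to\infty$, so the crude bound $\sigma_t(k)>k^{t}$ discards exactly the slack one needs. The real work therefore lies in invoking the sharper second-order forms of \eqref{4} and \eqref{201} and tracking the lower-order terms carefully, mirroring the way the delicate reductions to Lemma \ref{lem0} were the crux of Lemmas \ref{lem1} and \ref{lem2}.
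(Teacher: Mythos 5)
Your scaffolding is exactly the paper's: induction on $n$, the base case via monotonicity of $x\mapsto x^p/(1-xp)$, the algebraic reduction of the step $n\to n+1$ to the displayed inequality for $E_n$, and the best-possibility argument via $k^t\leq t\sum_{i=1}^k i^{t-1}\leq(k+1)^t$ — all of that is correct and matches the paper. But the heart of the proof, the verification of that reduced inequality, is missing: you defer it to ``feeding in the Levin--Ste\v ckin estimates of Lemma \ref{lem0} term by term,'' and you yourself diagnose why this cannot work — both sides behave like $(r-s)p\,n^{-rp}$, so the inequality is sharp at leading order and the slack lost in termwise substitution is fatal. The claim that after such substitutions everything ``collapses to a single inequality in $n$'' is not tenable: bounding each $\sigma_s(k)^{-p}$ leaves $E_n$ as a sum with no closed form, and no version of the Lemma \ref{lem0} bounds telescopes it against the single boundary term $\sigma_s(n+1)^{-p}$ with the required precision. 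Even your parenthetical ``one checks the right-hand side is positive as well'' is not a routine check: positivity is equivalent to $(1-sp)\sigma_s(n+1)^{-p}>(1-rp)(n+1)^{(r-s)p}\sigma_r(n+1)^{-p}$, i.e.\ to the strict monotonicity in $t$ of $t\mapsto(1+t)\sum_{i=1}^{n+1}\bigl(i/(n+1)\bigr)^{t}$, which is precisely Lemma \ref{lem1}.

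The paper closes this gap with three ideas absent from your plan. First, Lemma \ref{lem1} eliminates $r$ from the right-hand side, lower-bounding it (after normalizing by $(n+1)^{(r-s)p}$) by $(r-s)p\,\bigl(s\sum_{i=1}^{n+1}i^{s-1}\bigr)^{-p}$. Second, since $x\mapsto\bigl((1+1/n)^x-1\bigr)/x$ is increasing, $r$ may be replaced on the left by its extreme value $1/p$, reducing the whole inductive step to the $r$-free ratio inequality \eqref{4.14}, i.e.\ that $n\mapsto n^{sp-1}\sum_{k=1}^{n}\bigl(s\sum_{i=1}^{k}i^{s-1}\bigr)^{-p}$ is nondecreasing — exactly the second-order-sharp monotonicity your asymptotic analysis shows is needed. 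Third, \eqref{4.14} is proved not by Lemma \ref{lem0} (which in fact plays no role in this lemma's proof) but by comparing the ratio with $\sum_{k=1}^{n+1}k^{-sp}\big/\sum_{k=1}^{n}k^{-sp}$ via the ratio-comparison Lemma \ref{lem3}, and then invoking the Bennett--Jameson theorem on monotone averages twice, with $f(x)=x^{s-1}$ (giving \eqref{4.15}) and with $f(x)=x^{-sp}$. Without these ingredients, or substitutes of comparable strength, your inductive step stands unproven, and the mechanism you propose in its place would fail for the sharpness reason you yourself identify.
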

\begin{proof}
  We note first that as we have
\begin{align*}
   k^r \leq r\sum^{k}_{i=1}i^{r-1} \leq (k+1)^r,
\end{align*}
   it's easy to see that the constant $(1-sp)/(1-rp)$ in
   \eqref{4.11} is best possible.

   We now prove inequality \eqref{4.11} by induction. Note that when $n=1$, this follows
    easily from the fact that the function $r \mapsto r^p/(1-rp)$
    is an increasing function of $0<r<1/p$.

    Suppose now inequality \eqref{4.11} holds for some $n$ with $n
    \geq 1$, then we have
\begin{equation*}
   \frac {\sum^{n+1}_{k=1}(r\sum^{k}_{i=1}i^{r-1})^{-p}}{\sum^{n+1}_{k=1}(s\sum^{k}_{i=1}i^{s-1})^{-p}}
   <
    \frac {(\frac
    {1-sp}{1-rp})n^{(s-r)p}\sum^{n}_{k=1}(s\sum^{k}_{i=1}i^{s-1})^{-p}+(r\sum^{n+1}_{i=1}i^{r-1})^{-p}}{\sum^{n+1}_{k=1}(s\sum^{k}_{i=1}i^{s-1})^{-p}}.
\end{equation*}
    It suffices to show that the right-hand side expression above
    is $< \frac {1-sp}{1-rp}(n+1)^{(s-r)p}$, which, after simplification, is equivalent to the following
\begin{align*}
   &(1-sp)\left ( \left (1+\frac {1}{n} \right )^{(r-s)p}-1 \right )
   \sum^{n}_{k=1}\left(s\sum^{k}_{i=1}i^{s-1} \right )^{-p}  \\
   < &(1-sp) \left(s\sum^{n+1}_{i=1}i^{s-1} \right)^{-p}-(1-rp)(n+1)^{-sp}\left (\frac
{r}{n+1}\sum^{n+1}_{i=1}\left ( \frac {i}{n+1} \right )^{r-1}
\right )^{-p}.
\end{align*}
    We note that inequality \eqref{4.1} implies that for fixed $n
    \geq 1$, the function
\begin{align*}
    r \mapsto (1+r)\sum^{n}_{i=1}\left ( \frac {i}{n} \right )^r
\end{align*}
   strictly increases with $r>-1$. It follows that we have
\begin{align*}
  & (1-sp) \left(s\sum^{n+1}_{i=1}i^{s-1} \right)^{-p}-(1-rp)(n+1)^{-sp}\left (\frac
{r}{n+1}\sum^{n+1}_{i=1}\left ( \frac {i}{n+1} \right )^{r-1}
\right )^{-p} \\
> & (1-sp) \left(s\sum^{n+1}_{i=1}i^{s-1}
\right)^{-p}-(1-rp)(n+1)^{-sp}\left (\frac
{s}{n+1}\sum^{n+1}_{i=1}\left ( \frac {i}{n+1} \right )^{s-1}
\right )^{-p} \\
=& (r-s)p \left(s\sum^{n+1}_{i=1}i^{s-1} \right)^{-p}.
\end{align*}
   Thus, it remains to show that
\begin{align}
\label{4.12}
  \left(s\sum^{n+1}_{i=1}i^{s-1} \right)^{-p} \geq (1-sp)\frac {\left (1+\frac {1}{n} \right )^{(r-s)p}-1}{(r-s)p}
   \sum^{n}_{k=1}\left(s\sum^{k}_{i=1}i^{s-1} \right )^{-p}.
\end{align}
   As it is easy to show that the function
\begin{align*}
   x \mapsto \frac {\left (1+\frac {1}{n} \right )^{x}-1}{x}
\end{align*}
   is an increasing function for fixed $n$, it follows that we
   only need to establish inequality \eqref{4.12} with $r$
   replaced by $1/p$. After simplification, it is equivalent to
   the following inequality:
\begin{align}
\label{4.14}
   \frac {\sum^{n+1}_{k=1}\left(s\sum^{k}_{i=1}i^{s-1} \right)^{-p}}{\sum^{n}_{k=1}\left(s\sum^{k}_{i=1}i^{s-1} \right)^{-p}}
   \geq \frac {(n+1)^{1-sp}}{n^{1-sp}}.
\end{align}
   In order to establish the above inequality, we first show that
   for any $n \geq 1$, we have
\begin{align*}
   \frac {\sum^{n+1}_{k=1}\left(s\sum^{k}_{i=1}i^{s-1} \right)^{-p}}{\sum^{n}_{k=1}\left(s\sum^{k}_{i=1}i^{s-1} \right)^{-p}}
   \geq \frac {\sum^{n+1}_{k=1}i^{-sp}}{\sum^{n}_{k=1}i^{-sp}}.
\end{align*}
    The case $n=1$ of the above inequality can be easily
    established by observing that $s \geq 1$. We now apply Lemma
    \ref{lem3} to conclude that it remains to show for any $n \geq
    1$,
\begin{align*}
   \frac {\left(\sum^{n+1}_{i=1}i^{s-1} \right)^{-p}}{\left(\sum^{n}_{i=1}i^{s-1} \right)^{-p}}
   \geq \frac {(n+1)^{-sp}}{n^{-sp}}.
\end{align*}
    The above inequality is equivalent to
\begin{align}
\label{4.15}
   \frac
{1}{n+1}\sum^{n+1}_{i=1}\left ( \frac {i}{n+1} \right )^{s-1} \leq
\frac {1}{n}\sum^{n}_{i=1}\left ( \frac {i}{n} \right )^{s-1}.
\end{align}
    To establish the above inequality, we define for any function $f(x)$ defined on the interval $(0,1]$ and any integer $n \geq 1$,
\begin{align*}
    R_{n}(f)=\frac {1}{n}\sum^{n}_{i=1}f(\frac {i}{n}).
\end{align*}
    Then a result \cite[Theorem 3A]{B&J} of Bennett and Jameson asserts that $R_n(f)$ decreases(resp. increases) with
    $n$ if $f(x)$ is an increasing (resp. decreasing) function which is either convex or
    concave. This result applied to the function $f(x)=x^{s-1}$
    leads immediately to inequality \eqref{4.15}.

    We now conclude that in order to show inequality \eqref{4.14},
    it remains to show that
\begin{align*}
   \frac {\sum^{n+1}_{k=1}i^{-sp}}{\sum^{n}_{k=1}i^{-sp}} \geq \frac {(n+1)^{1-sp}}{n^{1-sp}}.
\end{align*}
    The above inequality is equivalent to
\begin{align*}
   \frac
{1}{n+1}\sum^{n+1}_{i=1}\left ( \frac {i}{n+1} \right )^{-sp} \geq
\frac {1}{n}\sum^{n}_{i=1}\left ( \frac {i}{n} \right )^{-sp},
\end{align*}
   which also follows from the above mentioned result of Bennett and
   Jameson applied to $f(x)=x^{-sp}$.
\end{proof}
\section{Proof of Theorem \ref{thm2}}
\label{sec 3} \setcounter{equation}{0}

    Motivated by the proof of \cite[Lemma 8]{BGE1}, we show that Theorem \ref{thm2} is a
    consequence of the following
\begin{theorem}[{\cite[Theorem 2]{B}, \cite[Theorem 4]{BGE}}]
\label{thm3}
  Let $0 < q \leq p < \infty$ and $p \geq 1$. Let $(a_{n,k})_{n,k \in {\mathbb N}}$ be a non-negative
matrix, $(b_k)$ be a non-negative sequence and let $C > 0$. Then
\begin{equation}
\label{2}
  \left ( \sum^{\infty}_{n=1} \left ( \sum^{\infty}_{k=1}a_{n,k}x_k\right )^q \right
   )^{1/q} \geq C \left ( \sum^{\infty}_{n=1}b_nx^p_n  \right )^{1/p}
\end{equation}
   holds for all non-negative non-increasing sequences $(x_n)$ if and only
   if for all $m \in {\mathbb N}$,
\begin{equation}
\label{3}
    \left ( \sum^{\infty}_{n=1} \left ( \sum^{m}_{k=1}a_{n,k} \right )^q \right
   )^{1/q} \geq C \left ( \sum^{m}_{n=1}b_n  \right )^{1/p}.
\end{equation}
   The theorem continues to hold when $0 < p \leq q < \infty$ and $p \leq 1$ provided that
   inequalities \eqref{2} and \eqref{3} are reversed.
\end{theorem}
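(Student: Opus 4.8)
The plan is to prove the two implications separately, the forward ("only if") implication being immediate and the converse ("if") implication carrying all the content. Throughout write $A_n(m)=\sum_{k=1}^m a_{n,k}$ and $B_m=\sum_{n=1}^m b_n$, and observe that testing \eqref{2} on the non-increasing indicator sequence $x=(1,\dots,1,0,0,\dots)$ ($m$ ones) turns the two inner sums into $\sum_{k=1}^m a_{n,k}=A_n(m)$ and $\sum_{n=1}^m b_n=B_m$, so that \eqref{2} restricted to these sequences is exactly \eqref{3}; this settles the "only if" direction (and its reversed counterpart) at once. For the converse I would first reduce to finitely supported non-increasing $x$ by truncation and monotone convergence, so that all the summations by parts below have vanishing boundary terms.

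Assume \eqref{3} holds for every $m$. Since $t\mapsto t^p$ is increasing, \eqref{2} is equivalent to $\big(\sum_n(\sum_k a_{n,k}x_k)^q\big)^{p/q}\ge C^p\sum_n b_nx_n^p$. Summation by parts on the right gives $\sum_n b_nx_n^p=\sum_m B_m(x_m^p-x_{m+1}^p)$, and since $x$ is non-increasing each factor $x_m^p-x_{m+1}^p$ is non-negative; applying \eqref{3} in the form $C^pB_m\le(\sum_n A_n(m)^q)^{p/q}$ termwise yields
$$C^p\sum_n b_nx_n^p\le\sum_m\Big(\sum_n A_n(m)^q\Big)^{p/q}(x_m^p-x_{m+1}^p).$$
Thus everything reduces to the \emph{key inequality}
$$\sum_m\Big(\sum_n A_n(m)^q\Big)^{p/q}(x_m^p-x_{m+1}^p)\le\Big(\sum_n\big(\sum_k a_{n,k}x_k\big)^q\Big)^{p/q},$$
which, writing $V_m=(A_n(m))_{n\ge1}$ (so that $\sum_k a_{n,k}x_k=\sum_m(x_m-x_{m+1})V_m$ in the $n$-th coordinate) and $\|\cdot\|$ for the $\ell^q$-norm over $n$, reads $\sum_m\|V_m\|^p(x_m^p-x_{m+1}^p)\le\|\sum_m(x_m-x_{m+1})V_m\|^p$.

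This last inequality is the crux, and I would prove it by telescoping. Put $W_k=V_k-V_{k-1}\ge0$ (with $V_0=0$) and $\mathbf F_k=\sum_{j\le k}x_jW_j$, so that $\sum_m(x_m-x_{m+1})V_m=\sum_k x_kW_k$, whose norm telescopes as $\|\sum_k x_kW_k\|^p=\sum_k(\|\mathbf F_k\|^p-\|\mathbf F_{k-1}\|^p)$; after one more summation by parts on the left the inequality becomes $\sum_k x_k^p(\|V_k\|^p-\|V_{k-1}\|^p)\le\sum_k(\|\mathbf F_k\|^p-\|\mathbf F_{k-1}\|^p)$, so it suffices to prove the termwise bound $x_k^p(\|V_k\|^p-\|V_{k-1}\|^p)\le\|\mathbf F_k\|^p-\|\mathbf F_{k-1}\|^p$. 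Setting $\mathbf G=x_kV_{k-1}$, $\mathbf H=x_k(V_k-V_{k-1})\ge0$ and $\mathbf R=\mathbf F_{k-1}-x_kV_{k-1}$, one checks from $x_j\ge x_k$ for $j\le k$ that $\mathbf R\ge0$ coordinatewise, that the left side equals $\Theta(\mathbf G)$ and the right side equals $\Theta(\mathbf G+\mathbf R)$, where $\Theta(\mathbf u):=\|\mathbf u+\mathbf H\|^p-\|\mathbf u\|^p$. The whole matter thus rests on the monotonicity lemma: for $1\le q\le p$ and fixed $\mathbf H\ge0$, $\Theta$ is coordinatewise non-decreasing on the non-negative cone. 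This is where the hypotheses $q\ge1$ and $p\ge q$ enter precisely: $\partial_{u_i}\Theta=p\|\mathbf u+\mathbf H\|^{p-q}(u_i+H_i)^{q-1}-p\|\mathbf u\|^{p-q}u_i^{q-1}\ge0$ because $p-q\ge0$ forces $\|\mathbf u+\mathbf H\|^{p-q}\ge\|\mathbf u\|^{p-q}$ and $q-1\ge0$ forces $(u_i+H_i)^{q-1}\ge u_i^{q-1}$. I expect this lemma to be the main obstacle: for $q>1$ the naive route through Minkowski's inequality $\|\sum_m(x_m-x_{m+1})V_m\|\le\sum_m(x_m-x_{m+1})\|V_m\|$ points the wrong way, and it is exactly the coordinatewise monotonicity, not the triangle inequality, that supplies the required reverse estimate.

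It remains to cover $0<q\le1$ (still $p\ge1$) and the reversed statement. For $0<q\le1$ I would bypass the vector argument: the scalar inequality $\sum_m c_m^p(x_m^p-x_{m+1}^p)\le(\sum_m(x_m-x_{m+1})c_m)^p$ for non-decreasing $c_m\ge0$ follows from $x_kc_k\le\sum_{j\le k}x_j(c_j-c_{j-1})$ together with convexity of $t\mapsto t^p$, and applying it with $c_m=\|V_m\|$ reduces the key inequality to $\sum_m(x_m-x_{m+1})\|V_m\|\le\|\sum_m(x_m-x_{m+1})V_m\|$, which is the super-additivity (reverse Minkowski) of $\|\cdot\|_q$ valid precisely for $0<q\le1$. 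Finally, the reversed statement ($0<p\le q$, $p\le1$) is obtained by running the same chain with every inequality reversed: \eqref{3} now bounds $B_m$ from below, the scalar inequality reverses because $t\mapsto t^p$ is concave for $p\le1$, and the key inequality is closed either by ordinary Minkowski (when $q\ge1$) or by the reversed form of the monotonicity lemma (when $p\le q\le1$, where now $p-q\le0$ and $q-1\le0$ reverse both factors). Assembling the cases completes the proof.
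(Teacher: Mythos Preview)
The paper does not prove this theorem: it is quoted from \cite[Theorem~2]{B} and \cite[Theorem~4]{BGE} and then used as a black box to derive Theorem~\ref{thm2}. So there is no ``paper's own proof'' to compare against; your proposal is an independent proof of a result the paper simply cites.

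That said, your argument is sound and follows a natural line. The necessity via indicator sequences is immediate. For sufficiency, your reduction to the key inequality
\[
\sum_m \|V_m\|_q^{\,p}\,(x_m^p-x_{m+1}^p)\ \le\ \Big\|\sum_m (x_m-x_{m+1})V_m\Big\|_q^{\,p}
\]
is correct, and the split into $q\ge1$ (monotonicity of $\Theta(\mathbf u)=\|\mathbf u+\mathbf H\|_q^p-\|\mathbf u\|_q^p$ via the derivative computation) and $0<q\le1$ (scalar convexity plus reverse Minkowski) is exactly the right dichotomy. One small remark on the scalar step: what you really use is $x_kc_{k-1}\le S_{k-1}$ together with the fact that $u\mapsto (u+h)^p-u^p$ is non-decreasing for $p\ge1$; this is the one-dimensional instance of your monotonicity lemma, so the two cases are more unified than your write-up suggests. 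The reversed regime $0<p\le q$, $p\le1$ goes through by the symmetric reversal of each step, as you indicate. The only point worth tightening in a final write-up is the justification of the truncation/limit passage (e.g.\ for the reversed inequality one must check that the left-hand side cannot be $+\infty$ while the right is finite, or else argue directly with partial sums), but this is routine.
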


We may assume $p \geq 1, 0<q \leq p$ here as the proof for the
other case is similar.
    We denote $y_n=\sum^{\infty}_{k=n}c_kx_k,
    n \geq 1$ so that we have $y_1 \geq y_2 \geq \ldots \geq 0$, and that
\begin{equation*}
x_n=\frac {y_n-y_{n+1}}{c_n}.
\end{equation*}
    We can then recast inequality \eqref{1} as
\begin{align*}
\label{4}
   \left ( \sum^{\infty}_{n=1} b^p_ny^p_n \right )^{1/p} & \leq
   K(p,q)\left ( \sum^{\infty}_{n=1} \left ( \sum^{\infty}_{k=1}a_{n,k}\left ( \frac {y_k-y_{k+1}}{c_k} \right ) \right )^q \right
   )^{1/q} \\
   & = K(p,q)\left ( \sum^{\infty}_{n=1} \left ( \sum^{\infty}_{k=1}
   \left (\frac {a_{n,k}}{c_k}-\frac {a_{n,k-1}}{c_{k-1}} \right ) y_k \right )^q \right
   )^{1/q},
\end{align*}
   where we set $a_{n,0}/c_0=0$. Note that by our assumption, $a_{n,k}/c_k$ increases with $k$ for any
fixed $n \geq 1$, so that
\begin{equation*}
   \frac {a_{n,k}}{c_k}-\frac {a_{n,k-1}}{c_{k-1}} \geq 0.
\end{equation*}
   Now the assertion of Theorem \ref{thm2} readily follows from
   Theorem \ref{thm3}.

\section{Some Applications of Theorem \ref{thm2}}
\label{sec 4} \setcounter{equation}{0}
   In this section we look at some applications of Theorem
   \ref{thm2}. All of our results in this section are motivated by (the reversed) inequality \eqref{1.3} for $0<p<1$.
   Thus we assume $0<p<1$ throughout this section and let ${\bf a}=(a_n)$ be any non-negative sequence. We first apply Theorem
   \ref{thm2} with
\begin{align*}
 a_{n,k} = \left\{\begin{array}{ll}
\frac{n^{-r}}{k^{1-r}} & {\mbox{ if }}  k \geq  n,   \\
 0 & {\mbox{ if }}  1 \leq k <n
\end{array}\right., \ b_n=n^{-s}, \ c_k=1/k^{1-s}, \ r>s
\end{align*}
   to see that
\begin{equation*}
   \sum^{\infty}_{n=1}\left (n^{-r}\sum^{\infty}_{k=n}\frac
   {a_k}{k^{1-r}} \right )^p \leq \sup_m\left (\frac {\sum^{m}_{n=1}(n/m)^{-rp}}{\sum^{m}_{n=1}(n/m)^{-sp}} \right )
   \sum^{\infty}_{n=1}\left (n^{-s}\sum^{\infty}_{k=n}\frac
   {a_k}{k^{1-s}} \right )^p.
\end{equation*}

    It follows from Lemma \ref{lem1} that we have the following
\begin{theorem}
\label{thm4} For $0<p<1$, $s<r<1/p$, $a_n \geq 0$, we have
\begin{equation}
\label{3.4}
   \sum^{\infty}_{n=1}\left (n^{-s}\sum^{\infty}_{k=n}\frac
   {a_k}{k^{1-s}} \right )^p \leq \sum^{\infty}_{n=1}\left (n^{-r}\sum^{\infty}_{k=n}\frac
   {a_k}{k^{1-r}} \right )^p < \left (\frac {1-sp}{1-rp} \right
   )
   \sum^{\infty}_{n=1}\left (n^{-s}\sum^{\infty}_{k=n}\frac
   {a_k}{k^{1-s}} \right )^p.
\end{equation}
   The constants are best possible.
\end{theorem}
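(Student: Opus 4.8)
The plan is to treat the two inequalities in the asserted chain separately, since they are of different natures. Write $S_r=\sum_{n=1}^{\infty}\big(n^{-r}\sum_{k=n}^{\infty}a_k/k^{1-r}\big)^p$ and let $S_s$ denote the analogous sum with $r$ replaced by $s$. The right-hand inequality $S_r<\frac{1-sp}{1-rp}S_s$ is exactly what the displayed inequality preceding the theorem yields once its supremum is estimated, so the real work there is to bound that supremum by $\frac{1-sp}{1-rp}$. The left-hand inequality $S_s\le S_r$ I expect to obtain by a direct termwise comparison, and the best-possibility of both constants to read off from the extremal cases of the reduction.

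First I would check that Theorem \ref{thm2} genuinely applies in its reversed form to the stated data $a_{n,k}=n^{-r}k^{r-1}$ for $k\ge n$ (and $0$ otherwise), $b_n=n^{-s}$, $c_k=k^{s-1}$, $q=p$. Here $a_{n,k}/c_k=n^{-r}k^{r-s}$, which is non-decreasing in $k$ because $r>s$; for $k<n$ the ratio is $0$ and only jumps upward at $k=n$, so the monotonicity hypothesis is satisfied. Since $0<p<1$ and $q=p$ (so that $q\ge p$), the reversed version of Theorem \ref{thm2} is in force, reducing the reversed form of inequality \eqref{1}, for all non-negative ${\bf x}$, to its validity at ${\bf x}=e^{(m)}$ for each $m$.

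Next I would evaluate both sides of \eqref{1} at ${\bf x}=e^{(m)}$. Its left-hand side collapses to $m^{s-1}\big(\sum_{n=1}^{m}n^{-sp}\big)^{1/p}$ and its right-hand side to $m^{r-1}\big(\sum_{n=1}^{m}n^{-rp}\big)^{1/p}$, so the best constant delivered by the reduction is $\sup_m R_m$, where $R_m=\big(\sum_{n=1}^{m}(n/m)^{-rp}\big)/\big(\sum_{n=1}^{m}(n/m)^{-sp}\big)$; this is precisely the supremum appearing in the displayed inequality. To estimate it I would invoke Lemma \ref{lem1} with numerator exponent $-rp$ and denominator exponent $-sp$: the required hypotheses $-sp>-rp>-1$ are exactly $s<r<1/p$, and Lemma \ref{lem1} then gives $R_m<\frac{1-sp}{1-rp}$ for every $m$, hence $S_r\le\frac{1-sp}{1-rp}S_s$.

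For the left inequality I would argue termwise: for $k\ge n$ one has $(n/k)^{r-s}\le1$, i.e.\ $n^{-s}k^{s-1}\le n^{-r}k^{r-1}$, so the inner sum for the exponent $s$ is dominated by that for $r$; raising to the $p$-th power and summing over $n$ gives $S_s\le S_r$. The two constants are then seen to be best possible from the two ends of the reduction: equality $S_s=S_r$ already occurs at $e^{(1)}$, pinning the left constant at $1$, while $R_m\to\frac{1-sp}{1-rp}$ as $m\to\infty$ (the Riemann-sum limit underlying the best-possibility clause of Lemma \ref{lem1}) shows the right constant cannot be lowered. The step I expect to be most delicate is the strictness of the right-hand inequality: because $\sup_m R_m$ equals $\frac{1-sp}{1-rp}$ and is approached only as $m\to\infty$, the reduction by itself yields only $S_r\le\frac{1-sp}{1-rp}S_s$. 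Upgrading this to a strict inequality requires showing that the extremal ratio is realized solely by mass escaping to infinity, so that for any sequence with $0<S_s<\infty$ the quotient $S_r/S_s$ stays strictly below the supremum; this is the one point where I would need an argument beyond the termwise and reduction estimates above.
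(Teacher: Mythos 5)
Your proposal is correct and follows essentially the same route as the paper: apply the reversed case of Theorem \ref{thm2} with the stated weights, evaluate at $x=e^{(m)}$ to reduce matters to the ratio $R_m=\bigl(\sum_{n=1}^{m}(n/m)^{-rp}\bigr)/\bigl(\sum_{n=1}^{m}(n/m)^{-sp}\bigr)$, bound it via Lemma \ref{lem1}, obtain the left inequality of \eqref{3.4} by the termwise comparison $n^{-s}k^{s-1}\le n^{-r}k^{r-1}$ for $k\ge n$, and read off best-possibility from $x=e^{(1)}$ and the limit $m\to\infty$. The delicate point you flag at the end---that since $\sup_m R_m=\frac{1-sp}{1-rp}$ is attained only in the limit, the reduction by itself yields $S_r\le\frac{1-sp}{1-rp}S_s$ and strictness requires a further argument---is passed over silently in the paper as well, which asserts the strict inequality immediately after citing Lemma \ref{lem1}, so your treatment is if anything more scrupulous than the source on this point.
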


    Note that the first inequality in \eqref{3.4} follows as we
    have plainly for $k \geq n, r>s$, $n^{-s}/k^{1-s} \leq
    n^{-r}/k^{1-r}$. Upon taking $a_1=1, a_k=0, k \geq 2$, one
    sees that the first inequality in \eqref{3.4} is also best
    possible.

    Next, we apply Theorem \ref{thm2} with
\begin{align*}
 a_{n,k} = \left\{\begin{array}{ll}
\frac{n^{-r}}{(k+1)^{1-r}} & {\mbox{ if }}  k \geq  n,   \\
 0 & {\mbox{ if }}  1 \leq k <n
\end{array}\right., \ b_n=n^{-s}, \ c_k=1/(k+1)^{1-s}, \ r>s
\end{align*}
   to see that
\begin{equation*}
   \sum^{\infty}_{n=1}\left (n^{-r}\sum^{\infty}_{k=n}\frac
   {a_k}{(k+1)^{1-r}} \right )^p \leq \sup_m\left (\frac {\sum^{m}_{n=1}(n/(m+1))^{-rp}}{\sum^{m}_{n=1}(n/(m+1))^{-sp}} \right )
   \sum^{\infty}_{n=1}\left (n^{-s}\sum^{\infty}_{k=n}\frac
   {a_k}{(k+1)^{1-s}} \right )^p.
\end{equation*}

   It follows from Lemma \ref{lem2} that we have the following
\begin{theorem}
\label{thm5} For $0<p<1$, $s<r<1/p$, $a_n \geq 0$, we have
\begin{equation*}
   \sum^{\infty}_{n=1}\left (n^{-r}\sum^{\infty}_{k=n}\frac
   {a_k}{(k+1)^{1-r}} \right )^p \leq  C_{p,r,s}
   \sum^{\infty}_{n=1}\left (n^{-s}\sum^{\infty}_{k=n}\frac
   {a_k}{(k+1)^{1-s}} \right )^p,
\end{equation*}
    where
\begin{align}
\label{4.9}
   C_{p, r, s}=\left\{\begin{array}{ll}
2^{(r-s)p} & {\mbox{ if }}  \ s<r \leq -\frac {1}{p},   \\
 \frac {1-sp}{1-rp} & {\mbox{ if }}  \ 0<s<r < \frac
 {1}{p}.
\end{array}\right.
\end{align}
   The constant $C_{p,r,s}$ is best
possible.
\end{theorem}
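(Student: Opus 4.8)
The plan is to derive Theorem~\ref{thm5} as a direct application of Theorem~\ref{thm2}, in complete parallel with the derivation of Theorem~\ref{thm4}; the one new ingredient is the sharp ratio estimate already established in Lemma~\ref{lem2}. First I would fix the data
\[
a_{n,k}=\frac{n^{-r}}{(k+1)^{1-r}}\ (k\ge n),\quad a_{n,k}=0\ (k<n),\qquad b_n=n^{-s},\qquad c_k=\frac{1}{(k+1)^{1-s}},
\]
and verify the hypothesis of Theorem~\ref{thm2}: on the range $k\ge n$ one has $a_{n,k}/c_k=n^{-r}(k+1)^{r-s}$, which increases with $k$ for each fixed $n$ because $r>s$. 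Since $0<p<1$ and both sides of the desired inequality are sums of $p$-th powers (so we are in the case $q=p$, for which $q\ge p$ holds), the reversed form of Theorem~\ref{thm2} applies: for a fixed constant the inequality holds for all non-negative $\mathbf{x}$ if and only if it holds on each coordinate vector $\mathbf{x}=e^{(m)}$. Hence the optimal constant is the supremum over $m$ of the ratios obtained by testing on $e^{(m)}$, which I would compute next.

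Evaluating the two sides on $e^{(m)}$, only the terms with $n\le m$ survive, and a short computation gives that the left-hand ($r$-)side equals $(m+1)^{-(1-r)p}\sum_{n=1}^{m}n^{-rp}$ while the right-hand ($s$-)side equals $(m+1)^{-(1-s)p}\sum_{n=1}^{m}n^{-sp}$. Dividing, the best constant is therefore
\[
C_{p,r,s}=\sup_{m\ge 1}\frac{(m+1)^{(r-s)p}\sum_{n=1}^{m}n^{-rp}}{\sum_{n=1}^{m}n^{-sp}}=\sup_{m\ge 1}\frac{\sum_{n=1}^{m}\bigl(n/(m+1)\bigr)^{-rp}}{\sum_{n=1}^{m}\bigl(n/(m+1)\bigr)^{-sp}},
\]
which is exactly the supremum displayed just before the statement of the theorem.

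It then remains to evaluate this supremum, and here I would invoke Lemma~\ref{lem2} under the correspondence $n\leftrightarrow m+1$, with the numerator exponent playing the role of the ``$r$'' of Lemma~\ref{lem2} via $r\leftrightarrow -rp$ and the denominator exponent via $s\leftrightarrow -sp$. Tracking the sign constraints produces the two branches of $C_{p,r,s}$: the range $s<r\le -1/p$ becomes $-sp>-rp\ge 1$, which is the first case of Lemma~\ref{lem2} and yields the value $2^{-sp-(-rp)}=2^{(r-s)p}$; the range $0<s<r<1/p$ becomes $0>-sp>-rp>-1$, the second case, yielding $(1+(-sp))/(1+(-rp))=(1-sp)/(1-rp)$. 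This reproduces the two lines of \eqref{4.9}.

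For best-possibility I would again read off Lemma~\ref{lem2}: in the first regime the tested ratio already equals $2^{(r-s)p}$ at $m=1$ (the case $n=2$ of Lemma~\ref{lem2}), so the supremum is attained and the constant is sharp, while in the second regime the bound $(1-sp)/(1-rp)$ is strict for each finite $m$ but approached as $m\to\infty$, so the supremum equals it and cannot be lowered. Since all the analytic difficulty has been absorbed into Lemmas~\ref{lem1} and~\ref{lem2}, the only genuine care needed here is the bookkeeping of the substitution: one must confirm that multiplying exponents by $-p$ with $0<p<1$ reverses the inequalities in the expected way and sends each prescribed range of $r,s$ into precisely one of the two cases of Lemma~\ref{lem2}. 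Once that correspondence is pinned down, Theorem~\ref{thm5} follows at once.
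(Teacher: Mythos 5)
Your proposal is correct and follows the paper's own route exactly: the same choice of $a_{n,k}=n^{-r}/(k+1)^{1-r}$, $b_n=n^{-s}$, $c_k=(k+1)^{s-1}$ in the reversed case of Theorem~\ref{thm2}, reduction to the coordinate tests $e^{(m)}$ giving the supremum of $\sum_{n=1}^{m}(n/(m+1))^{-rp}\big/\sum_{n=1}^{m}(n/(m+1))^{-sp}$, and evaluation of that supremum via Lemma~\ref{lem2} under the substitution $(r,s)\mapsto(-rp,-sp)$, $n\mapsto m+1$. Your bookkeeping of the two parameter branches and of sharpness (equality at $m=1$ in the first regime, limit as $m\to\infty$ in the second) matches the paper's argument in every respect.
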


\begin{cor}
\label{cor0}
   Let $a_n \geq 0,  0<p<1$. For $0< \beta<\alpha < \frac
 {1}{p}$, we have
\begin{equation}
\label{4.10}
   \sum^{\infty}_{n=1}\Big ( \frac {1}{n^{\alpha}}\sum^{\infty}_{k=n}\Big((k+1)^{\alpha}-k^{\alpha}\Big )a_k \Big
   )^p \leq \frac {\alpha^p}{\beta^p}C_{p,\alpha, \beta}\sum^{\infty}_{n=1}\Big ( \frac {1}{n^{\beta}}\sum^{\infty}_{k=n}\Big((k+1)^{\beta}-k^{\beta}\Big )a_k \Big
   )^p,
\end{equation}
    where $C_{p,\alpha, \beta}$ is defined as in \eqref{4.9} and the constant is best possible.
\end{cor}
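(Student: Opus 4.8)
The factor $\alpha^p/\beta^p$ in \eqref{4.10} signals that the mean value theorem should be used to trade the increments $(k+1)^\gamma-k^\gamma$ for the pure powers appearing in Theorem \ref{thm5}. Writing $(k+1)^\gamma-k^\gamma=\gamma\int_0^1(k+u)^{\gamma-1}\,du$, the monotonicity of $t\mapsto t^{\gamma-1}$ gives $\gamma(k+1)^{\gamma-1}\le(k+1)^\gamma-k^\gamma\le\gamma k^{\gamma-1}$ for $0<\gamma\le1$, with both inequalities reversed for $\gamma\ge1$. The plan is to bound the left-hand side of \eqref{4.10} from above by $\alpha^p$ times the $\alpha$-side of Theorem \ref{thm5}, bound the $\beta$-side of Theorem \ref{thm5} from below by $\beta^{-p}$ times the right-hand side of \eqref{4.10}, and then insert the constant $C_{p,\alpha,\beta}=(1-\beta p)/(1-\alpha p)$ from Theorem \ref{thm5}; since $u\mapsto u^p$ is increasing and all terms are non-negative, a termwise comparison of the inner sums suffices. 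This goes through directly in the range $\beta\le1\le\alpha$, where the required bounds $(k+1)^\alpha-k^\alpha\le\alpha(k+1)^{\alpha-1}$ and $(k+1)^\beta-k^\beta\ge\beta(k+1)^{\beta-1}$ both hold.

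For the full range $0<\beta<\alpha<1/p$ the termwise comparison flips sign (it fails once $\alpha<1$ or $\beta>1$), so I would instead apply Theorem \ref{thm2} directly. Take $p=q\in(0,1)$ and set $a_{n,k}=\big((k+1)^\alpha-k^\alpha\big)/n^\alpha$ for $k\ge n$ (and $0$ otherwise), $b_n=n^{-\beta}$, $c_k=(k+1)^\beta-k^\beta$, so that the two sides of \eqref{4.10} are exactly the two sides of the reversed form of \eqref{1} (the branch $0<p\le1$, $q\ge p$). The hypothesis of Theorem \ref{thm2} requires $a_{n,k}/c_k$ to increase in $k$, i.e. $\psi(k):=\big((k+1)^\alpha-k^\alpha\big)/\big((k+1)^\beta-k^\beta\big)$ to be increasing; I would verify this from the representation $(k+1)^\gamma-k^\gamma=\gamma\int_0^1(k+u)^{\gamma-1}\,du$ by comparing the logarithmic derivatives of numerator and denominator, or via Lemma \ref{lem3}. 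Granting this, Theorem \ref{thm2} reduces \eqref{4.10} to testing it on the sequences $x=e^{(m)}$.

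Evaluating both sides at $x=e^{(m)}$ turns the problem into the single estimate $\sup_m R_m\le(\alpha^p/\beta^p)(1-\beta p)/(1-\alpha p)$, where
\[
R_m=\frac{\big((m+1)^\alpha-m^\alpha\big)^p\sum_{n=1}^m n^{-\alpha p}}{\big((m+1)^\beta-m^\beta\big)^p\sum_{n=1}^m n^{-\beta p}}
=\frac{\alpha^p}{\beta^p}\left(\frac{\phi_m(\alpha)}{\phi_m(\beta)}\right)^p\frac{\sum_{n=1}^m (n/m)^{-\alpha p}}{\sum_{n=1}^m (n/m)^{-\beta p}},\quad \phi_m(\gamma)=\frac{m}{\gamma}\Big(\big(1+\tfrac1m\big)^\gamma-1\Big).
\]
Lemma \ref{lem1} (with $r=-\alpha p$, $s=-\beta p$) bounds the power-sum ratio by $(1-\beta p)/(1-\alpha p)$, and letting $m\to\infty$ gives $\phi_m(\gamma)\to1$ together with the power-sum ratio $\to(1-\beta p)/(1-\alpha p)$, so $R_m\to(\alpha^p/\beta^p)(1-\beta p)/(1-\alpha p)$, which identifies the constant and shows it is best possible. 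The main obstacle is precisely that $\phi_m(\alpha)/\phi_m(\beta)\ge1$, so multiplying the two factorwise bounds does not close the estimate: one must show that the strict slack in Lemma \ref{lem1} at finite $m$ exactly absorbs the excess $(\phi_m(\alpha)/\phi_m(\beta))^p$, equivalently that $R_m$ never exceeds its limiting value — a combined inequality that is the crux of the argument.
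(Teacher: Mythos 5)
Your reduction via Theorem \ref{thm2} (same choice of $a_{n,k}$, $b_n$, $c_k$ as the paper, monotonicity of $a_{n,k}/c_k$ via the Mean Value Theorem, sharpness via $m\to\infty$) is exactly the paper's, but the proof has a genuine gap, which you yourself flag: the finite-$m$ bound $\sup_m R_m \le (\alpha^p/\beta^p)(1-\beta p)/(1-\alpha p)$ is never established. Normalizing the power sums by $m$ and applying Lemma \ref{lem1} leaves the excess factor $(\phi_m(\alpha)/\phi_m(\beta))^p>1$, and, as you note, the two factorwise bounds do not multiply to the claimed constant. Saying that the strict slack in Lemma \ref{lem1} at finite $m$ ``exactly absorbs the excess'' is a restatement of what must be proved, not a proof; as written, your argument covers only the sub-range $\beta\le 1\le\alpha$ from your first paragraph, plus the sharpness claim.

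The missing idea is to normalize by $m+1$ rather than $m$ --- and the paper has already built the lemma for exactly this purpose. By the Mean Value Theorem,
\begin{equation*}
\frac{(m+1)^{\alpha}-m^{\alpha}}{(m+1)^{\beta}-m^{\beta}}
=\frac{\alpha}{\beta}\,\xi^{\alpha-\beta}
\le \frac{\alpha}{\beta}\,(m+1)^{\alpha-\beta},
\qquad m<\xi<m+1,
\end{equation*}
so that
\begin{equation*}
R_m \le \frac{\alpha^p}{\beta^p}\cdot
\frac{\sum_{n=1}^{m}\bigl(n/(m+1)\bigr)^{-\alpha p}}{\sum_{n=1}^{m}\bigl(n/(m+1)\bigr)^{-\beta p}},
\end{equation*}
and the right-hand ratio is precisely the object of Lemma \ref{lem2} (case $0>s>r>-1$, with $r=-\alpha p$, $s=-\beta p$, and the lemma's $n$ equal to $m+1$; the hypothesis $0<\beta<\alpha<1/p$ is exactly what puts you in this case), which bounds it by $(1-\beta p)/(1-\alpha p)=C_{p,\alpha,\beta}$. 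Equivalently, this is the sup already computed in the proof of Theorem \ref{thm5}, which is how the paper phrases the final step. The point is that the shift from $m$ to $m+1$ in the normalization absorbs exactly the excess $(\phi_m(\alpha)/\phi_m(\beta))^p$ you could not control: Lemma \ref{lem2} is the shifted, strengthened version of Lemma \ref{lem1}, proved by induction in Section \ref{sec 2}, that quantifies the slack surviving the shift. With that one substitution --- bound $\xi^{\alpha-\beta}$ by $(m+1)^{\alpha-\beta}$ and invoke Lemma \ref{lem2} instead of Lemma \ref{lem1} --- your argument closes and coincides with the paper's proof.
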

\begin{proof}
   We apply Theorem \ref{thm2} with
\begin{align*}
 a_{n,k} = \left\{\begin{array}{ll}
 n^{-\alpha}\left ( (k+1)^{\alpha}-k^{\alpha} \right ) & {\mbox{ if }}  k \geq  n,   \\
 0 & {\mbox{ if }}  1 \leq k <n
\end{array}\right., \ b_n=n^{-\beta}, \ c_k=(k+1)^{\beta}-k^{\beta} , \
\alpha>\beta.
\end{align*}
   Note that the fact $a_{n,k}/c_k$ increases with $k$ is an easy consequence of the
   Mean Value Theorem. Thus we obtain
\begin{align*}
   &\sum^{\infty}_{n=1}\Big ( \frac {1}{n^{\alpha}}\sum^{\infty}_{k=n}\Big((k+1)^{\alpha}-k^{\alpha}\Big )a_k \Big
   )^p  \\
   \leq & \sup_m\left (\frac {\sum^{m}_{n=1}n^{-\alpha p}\left ((m+1)^{\alpha}-m^{\alpha} \right )^{p}}
   {\sum^{m}_{n=1}n^{-\beta p}\left ((m+1)^{\beta}-m^{\beta} \right )^{p}} \right )
   \sum^{\infty}_{n=1}\Big ( \frac {1}{n^{\beta}}\sum^{\infty}_{k=n}\Big((k+1)^{\beta}-k^{\beta}\Big )a_k \Big
   )^p.
\end{align*}
   Note that by the Mean Value Theorem, we have
\begin{align*}
   \frac
   {(m+1)^{\alpha}-m^{\alpha}}{(m+1)^{\beta}-m^{\beta}}=\frac
   {\alpha}{\beta}\xi^{\alpha-\beta}\leq \frac
   {\alpha}{\beta}(m+1)^{\alpha-\beta},
\end{align*}
   where $m<\xi<m+1$. It follows that
\begin{align*}
   \sup_m\left (\frac {\sum^{m}_{n=1}n^{-\alpha p}\left ((m+1)^{\alpha}-m^{\alpha} \right )^{p}}
   {\sum^{m}_{n=1}n^{-\beta p}\left ((m+1)^{\beta}-m^{\beta} \right )^{p}} \right )
   \leq \frac {\alpha^p}{\beta^p} \sup_m\left (\frac {\sum^{m}_{n=1}(n/(m+1))^{-\alpha p}}{\sum^{m}_{n=1}(n/(m+1))^{-\beta p}} \right ).
\end{align*}
    Inequality \eqref{4.10} then follows from Theorem \ref{thm5}.
    We further note that we have
\begin{align*}
   \lim_{m \rightarrow +\infty}\frac {\sum^{m}_{n=1}n^{-\alpha p}\left ((m+1)^{\alpha}-m^{\alpha} \right )^{p}}
   {\sum^{m}_{n=1}n^{-\beta p}\left ((m+1)^{\beta}-m^{\beta} \right )^{p}}
   =\frac {\alpha^p}{\beta^p} C_{p, \alpha, \beta}.
\end{align*}
   This shows that the constant in
   \eqref{4.10} is best possible and this completes the proof.
\end{proof}

    Upon letting $\beta \rightarrow 0^+$, we immediately obtain
    the following
\begin{cor}
\label{cor1}
   Let $a_n \geq 0,  0<p<1$. For $0< \alpha < \frac
 {1}{p}$, we have
\begin{equation*}
   \sum^{\infty}_{n=1}\Big ( \frac {1}{n^{\alpha}}\sum^{\infty}_{k=n}\Big((k+1)^{\alpha}-k^{\alpha}\Big )a_k \Big
   )^p \leq \left (\frac {\alpha^p}{1- \alpha p} \right )\sum^{\infty}_{n=1}\Big (\sum^{\infty}_{k=n}\ln \left ( \frac {k+1}{k}\right )a_k \Big
   )^p.
\end{equation*}
    The constant is best possible.
\end{cor}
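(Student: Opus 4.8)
The plan is to deduce Corollary \ref{cor1} from Corollary \ref{cor0} by letting $\beta\to0^+$, exactly as the sentence preceding the statement suggests. For $0<\beta<\alpha<1/p$ we have $C_{p,\alpha,\beta}=(1-\beta p)/(1-\alpha p)$ from \eqref{4.9}, so the constant in \eqref{4.10} equals $\alpha^p(1-\beta p)/(\beta^p(1-\alpha p))$. The factor $\beta^{-p}$ blows up as $\beta\to0^+$, so my first step is to absorb it into the right-hand sum, rewriting the right side of \eqref{4.10} as
\begin{equation*}
  \frac{\alpha^p(1-\beta p)}{1-\alpha p}\sum_{n=1}^\infty\Big(n^{-\beta}\sum_{k=n}^\infty\frac{(k+1)^\beta-k^\beta}{\beta}\,a_k\Big)^p.
\end{equation*}
Since the left side of \eqref{4.10} is independent of $\beta$, it then suffices to show that this expression tends to $\frac{\alpha^p}{1-\alpha p}\sum_{n=1}^\infty\big(\sum_{k=n}^\infty\ln\frac{k+1}{k}\,a_k\big)^p$ as $\beta\to0^+$.

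The crux is the elementary identity
\begin{equation*}
  n^{-\beta}\,\frac{(k+1)^\beta-k^\beta}{\beta}=\int_{k/n}^{(k+1)/n}t^{\beta-1}\,dt,
\end{equation*}
valid for $k\ge n\ge1$, so that the integration range lies in $[1,\infty)$. On that range $t^{\beta-1}$ is increasing in $\beta$, so each coefficient \emph{decreases} monotonically to $\int_{k/n}^{(k+1)/n}t^{-1}\,dt=\ln\frac{k+1}{k}$ as $\beta\to0^+$. Hence every term $n^{-\beta}\frac{(k+1)^\beta-k^\beta}{\beta}a_k$ decreases to $\ln\frac{k+1}{k}a_k$, and the configuration is tailored for a monotone (decreasing) convergence argument: the inner sums decrease to $\sum_{k\ge n}\ln\frac{k+1}{k}a_k$, and after raising to the power $p$ and summing over $n$ the result decreases to $\sum_{n}\big(\sum_{k\ge n}\ln\frac{k+1}{k}a_k\big)^p$; together with $(1-\beta p)\to1$ this produces the constant $\frac{\alpha^p}{1-\alpha p}$.

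I expect the passage of the limit through the double sum to be the main obstacle. I would justify it by monotone convergence, which is legitimate once some $\beta_0\in(0,\alpha)$ makes the inner and outer sums finite; that value then dominates the terms for all $\beta<\beta_0$. The one point needing care is exactly this finiteness, which I would secure as follows. If the target right side of Corollary \ref{cor1} is infinite, the assertion is trivial, so I may assume it finite. Then, using the elementary bounds $\frac{(k+1)^\beta-k^\beta}{\beta}=\int_k^{k+1}t^{\beta-1}\,dt\le k^{\beta-1}$ and $\ln\frac{k+1}{k}=\int_k^{k+1}t^{-1}\,dt\ge\frac{1}{2k}$, I reduce to controlling $\sum_n\big(n^{-\beta}\sum_{k\ge n}a_k/k^{1-\beta}\big)^p$, and Theorem \ref{thm4} with $s=0$, $r=\beta$ bounds this by $\frac{1}{1-\beta p}\sum_n\big(\sum_{k\ge n}a_k/k\big)^p\le\frac{2^p}{1-\beta p}$ times the finite target, as required. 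Passing to the limit in \eqref{4.10} then yields the inequality with constant $\frac{\alpha^p}{1-\alpha p}$.

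Finally, for best possibility I would test on $a=e^{(m)}$ and let $m\to\infty$. For this choice the left side equals $((m+1)^\alpha-m^\alpha)^p\sum_{n=1}^m n^{-\alpha p}$, while the right-hand sum (before multiplication by the constant) equals $m\,(\ln\frac{m+1}{m})^p$. The asymptotics $(m+1)^\alpha-m^\alpha\sim\alpha m^{\alpha-1}$, $\sum_{n=1}^m n^{-\alpha p}\sim m^{1-\alpha p}/(1-\alpha p)$, and $\ln\frac{m+1}{m}\sim1/m$ then show that the ratio of these two quantities tends to $\frac{\alpha^p}{1-\alpha p}$. Since the inequality already holds with this constant, no smaller constant can work, and $\frac{\alpha^p}{1-\alpha p}$ is best possible.
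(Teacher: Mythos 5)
Your proposal is correct and follows exactly the paper's route: the paper derives Corollary \ref{cor1} from Corollary \ref{cor0} simply by the one-line remark ``upon letting $\beta \rightarrow 0^+$,'' which is precisely your strategy. Your write-up merely supplies the details the paper leaves implicit --- the monotone/dominated-convergence justification of the limit (with finiteness secured via Theorem \ref{thm4} with $s=0$, $r=\beta$, and the bound $\ln\frac{k+1}{k}\geq\frac{1}{2k}$), and the verification of best possibility by testing $a=e^{(m)}$ --- and all of these checks are accurate.
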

    Note that as $\ln (1+1/k) \leq 1/k$, we have the following
\begin{cor}
\label{cor2}
   Let $a_n \geq 0,  0<p<1$. For $0< \alpha < \frac
 {1}{p}$, we have
\begin{equation*}
   \sum^{\infty}_{n=1}\Big ( \frac {1}{n^{\alpha}}\sum^{\infty}_{k=n}\Big((k+1)^{\alpha}-k^{\alpha}\Big )a_k \Big
   )^p \leq \left (\frac {\alpha^p}{1- \alpha p} \right )\sum^{\infty}_{n=1}\Big (\sum^{\infty}_{k=n}\frac {a_k}{k} \Big
   )^p.
\end{equation*}
    The constant is best possible.
\end{cor}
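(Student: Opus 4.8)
The plan is to obtain the displayed inequality as an immediate consequence of Corollary \ref{cor1}, and then to re-establish optimality of the constant by evaluating both sides on the standard unit sequences.

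First I would invoke Corollary \ref{cor1}, which already supplies
\[
\sum_{n=1}^{\infty}\Big(\frac{1}{n^{\alpha}}\sum_{k=n}^{\infty}\big((k+1)^{\alpha}-k^{\alpha}\big)a_k\Big)^p \le \frac{\alpha^p}{1-\alpha p}\sum_{n=1}^{\infty}\Big(\sum_{k=n}^{\infty}\ln\Big(\frac{k+1}{k}\Big)a_k\Big)^p.
\]
Since $\ln(1+1/k)\le 1/k$ and every $a_k\ge 0$, each inner sum on the right obeys $\sum_{k=n}^{\infty}\ln((k+1)/k)\,a_k \le \sum_{k=n}^{\infty} a_k/k$. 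Both quantities are non-negative and $p>0$, so raising to the $p$-th power preserves the inequality term by term; summing over $n$ then replaces the right-hand side above by $\frac{\alpha^p}{1-\alpha p}\sum_{n}\big(\sum_{k\ge n}a_k/k\big)^p$, which is exactly the claimed bound.

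To show the constant is best possible, I would test the inequality on the unit sequences $a=e^{(m)}$, which are admissible non-negative sequences. For $a_k=\delta_{k,m}$ the left-hand side collapses to $((m+1)^{\alpha}-m^{\alpha})^p\sum_{n=1}^{m}n^{-\alpha p}$, while the right-hand inner sum gives $\sum_{n=1}^m(1/m)^p=m^{1-p}$. Using $(m+1)^{\alpha}-m^{\alpha}\sim \alpha\, m^{\alpha-1}$ together with $\sum_{n=1}^m n^{-\alpha p}\sim m^{1-\alpha p}/(1-\alpha p)$ (valid since $\alpha p<1$), the ratio of the two sides tends to $\alpha^p/(1-\alpha p)$ as $m\to\infty$. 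As the inequality forces this ratio to stay $\le \alpha^p/(1-\alpha p)$ for every $m$, no smaller constant is admissible, so the constant is optimal.

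The one point requiring care—and the only genuine obstacle—is that passing from Corollary \ref{cor1} to the present statement strictly enlarges the right-hand side, since we replace $\ln((k+1)/k)$ by the larger $1/k$; optimality is therefore not automatic and must be recovered separately. The asymptotic computation above settles this, the key being that $k\ln(1+1/k)\to 1$, so the enlargement is asymptotically negligible for sequences supported at large indices, which is precisely why the extremal family $e^{(m)}$ with $m\to\infty$ reproduces the same constant. I expect the bookkeeping in the asymptotics of $\sum_{n=1}^m n^{-\alpha p}$ and of $(m+1)^\alpha-m^\alpha$ to be the most delicate, albeit entirely routine, part of the argument.
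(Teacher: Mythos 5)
Your proposal is correct and follows essentially the same route as the paper, which obtains the inequality from Corollary \ref{cor1} via the bound $\ln(1+1/k)\leq 1/k$ and simply asserts sharpness. Your additional verification of the constant on $a=e^{(m)}$ as $m\to\infty$ (with the asymptotics $(m+1)^{\alpha}-m^{\alpha}\sim\alpha m^{\alpha-1}$ and $\sum_{n=1}^{m}n^{-\alpha p}\sim m^{1-\alpha p}/(1-\alpha p)$, giving ratio $\to\alpha^p/(1-\alpha p)$) is accurate and rightly supplies the sharpness argument the paper leaves implicit, which is genuinely needed here since enlarging the right-hand side could a priori permit a smaller constant.
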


   We now consider an analogue of inequality \eqref{4.10} by
   taking
\begin{align*}
 a_{n,k} = \left\{\begin{array}{ll}
 k^{-\beta}\left ( n^{\beta}-(n-1)^{\beta} \right ) & {\mbox{ if }}  k \geq  n,   \\
 0 & {\mbox{ if }}  1 \leq k <n
\end{array}\right., \ b_n=n^{\alpha}-(n-1)^{\alpha} , \ c_k=k^{-\alpha}, \
\alpha>\beta.
\end{align*}
   Then it follows from Theorem \ref{thm2} that
\begin{align*}
  & \sum^{\infty}_{n=1}\left (\left ( n^{\beta}-(n-1)^{\beta} \right ) \sum^{\infty}_{k=n}k^{-\beta}a_k \right )^p \\
  \leq & \sup_m\left (\frac
{\sum^{m}_{n=1}(n^{\beta}-(n-1)^{\beta})^{p}m^{-\beta
p}}{\sum^{m}_{n=1}(n^{\alpha}-(n-1)^{\alpha})^{p}m^{-\alpha p}}
\right )
   \sum^{\infty}_{n=1}\left (\Big ( n^{\alpha}-(n-1)^{\alpha} \Big ) \sum^{\infty}_{k=n}k^{-\alpha}a_k \right )^p.
\end{align*}

   We recall that for two positive real finite sequences
   ${\bf x}=(x_1, x_2, \ldots, x_n)$ and ${\bf y}=(y_1, y_2,
   \ldots, y_n)$, ${\bf x}$ is said to be
   majorized by ${\bf y}$ if for all convex functions $f$, we have
\begin{equation*}
   \sum_{j=1}^{n}f(x_j) \leq \sum_{j=1}^{n}f(y_j).
\end{equation*}

    We write ${\bf x} \leq_{maj} {\bf y}$ if this occurs and the
    majorization principle states that if $(x_j)$ and $(y_j)$ are
    decreasing, then ${\bf x} \leq_{maj} {\bf y}$ is equivalent to
\begin{eqnarray*}
\label{10}
   x_1+x_2+\ldots+x_j & \leq & y_1+y_2+\ldots+y_j \ (1 \leq j \leq
   n-1),
     \\
    x_1+x_2+\ldots+x_n & = & y_1+y_2+\ldots+y_n .
\end{eqnarray*}
    We refer the reader to \cite[Sect. 1.30]{B&B} for a simple proof of
    this.

    Now suppose $0<\beta < \alpha \leq 1$, we apply the majorization principle to the convex function $-x^p$ and the two sequences
\begin{align*}
{\bf x}=\left(\frac {k^{\alpha}-(k-1)^{\alpha}}{n^{\alpha}}
\right)_{1 \leq k \leq n}, \ {\bf y}=\left(\frac
{k^{\beta}-(k-1)^{\beta}}{n^{\beta}} \right)_{1 \leq k \leq n}.
\end{align*}
   It's easy to see that both ${\bf x}$ and ${\bf y}$ are
   decreasing and ${\bf x} \leq_{maj} {\bf y}$. It follows that
\begin{align*}
   \frac
{\sum^{m}_{n=1}(n^{\beta}-(n-1)^{\beta})^{p}m^{-\beta
p}}{\sum^{m}_{n=1}(n^{\alpha}-(n-1)^{\alpha})^{p}m^{-\alpha p}}
\leq 1.
\end{align*}
   As the above inequality becomes an identity when $m=1$, we
   obtain the following
\begin{theorem}
\label{thm7}
   Let $a_n \geq 0,  0<p<1$. For $0< \beta < \alpha \leq 1$, we have
\begin{equation*}
\sum^{\infty}_{n=1}\left (\left ( n^{\beta}-(n-1)^{\beta} \right )
\sum^{\infty}_{k=n}k^{-\beta}a_k \right )^p
  \leq
   \sum^{\infty}_{n=1}\left (\Big ( n^{\alpha}-(n-1)^{\alpha} \Big ) \sum^{\infty}_{k=n}k^{-\alpha}a_k \right )^p.
\end{equation*}
    The constant is best possible.
\end{theorem}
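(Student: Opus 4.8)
The plan is to deduce Theorem~\ref{thm7} from the reversed form of Theorem~\ref{thm2}. Since the two exponents appearing in the statement coincide, I work with $q=p$ and $0<p<1$, which is exactly the regime $0<p\le 1$, $q\ge p$ in which Theorem~\ref{thm2} holds with inequality~\eqref{1} reversed. I feed in the data already displayed above, namely
\[
a_{n,k}=k^{-\beta}\bigl(n^{\beta}-(n-1)^{\beta}\bigr)\ (k\ge n),\quad a_{n,k}=0\ (k<n),\qquad b_n=n^{\alpha}-(n-1)^{\alpha},\qquad c_k=k^{-\alpha}.
\]
Before invoking Theorem~\ref{thm2} I must check its structural hypothesis, that $a_{n,k}/c_k$ be non-decreasing in $k$ for each fixed $n$. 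Here $a_{n,k}/c_k=k^{\alpha-\beta}\bigl(n^{\beta}-(n-1)^{\beta}\bigr)$ on $k\ge n$ and vanishes for $k<n$, so it is non-decreasing precisely because $\alpha>\beta$.

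With the hypothesis verified, Theorem~\ref{thm2} reduces the inequality for all non-negative $(a_k)$ to the inequalities obtained by testing on the basis vectors $e^{(m)}$. Evaluating both sides at $x=e^{(m)}$ collapses the inner sums to a single term, and the sharp constant for the theorem's (unreversed) form is thereby identified as the supremum over $m$ of the ratio
\[
\frac{\sum_{n=1}^{m}\bigl(n^{\beta}-(n-1)^{\beta}\bigr)^{p}m^{-\beta p}}{\sum_{n=1}^{m}\bigl(n^{\alpha}-(n-1)^{\alpha}\bigr)^{p}m^{-\alpha p}} .
\]
Thus it suffices to prove that this ratio is $\le 1$ for every $m$, that is, $\sum_{k=1}^{m}\bigl((k^{\beta}-(k-1)^{\beta})/m^{\beta}\bigr)^{p}\le\sum_{k=1}^{m}\bigl((k^{\alpha}-(k-1)^{\alpha})/m^{\alpha}\bigr)^{p}$.

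The crux of the argument, and the step I expect to be the main obstacle, is to obtain this pointwise-in-$m$ bound from the majorization principle. I set, for fixed $m$,
\[
{\bf x}=\Bigl(\tfrac{k^{\alpha}-(k-1)^{\alpha}}{m^{\alpha}}\Bigr)_{1\le k\le m},\qquad {\bf y}=\Bigl(\tfrac{k^{\beta}-(k-1)^{\beta}}{m^{\beta}}\Bigr)_{1\le k\le m}.
\]
Both sequences are decreasing in $k$, since $0<\beta<\alpha\le1$ makes $t\mapsto t^{\gamma}$ concave and hence its forward differences decreasing, and both telescope to total sum $1$. The decisive observation is that the partial sums also telescope: $\sum_{k=1}^{j}x_k=(j/m)^{\alpha}$ and $\sum_{k=1}^{j}y_k=(j/m)^{\beta}$, so the majorization conditions $\sum_{k\le j}x_k\le\sum_{k\le j}y_k$ for $1\le j\le m-1$ are exactly $(j/m)^{\alpha}\le(j/m)^{\beta}$, which hold because $0<j/m<1$ and $\alpha>\beta$, while equality of the full sums is automatic. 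Hence ${\bf x}\le_{maj}{\bf y}$. Applying the principle to the function $t\mapsto-t^{p}$, which is convex precisely because $0<p<1$, reverses the inequality and gives $\sum_k x_k^{p}\ge\sum_k y_k^{p}$, which is the required bound.

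Combining the steps yields the ratio $\le1$ uniformly in $m$, hence the constant $1$ in the theorem. For best possibility I would simply note that at $m=1$ both the numerator and the denominator of the ratio equal $1$, so the supremum is attained and the constant cannot be lowered.
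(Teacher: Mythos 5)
Your proposal is correct and takes essentially the same route as the paper: the same choice of $a_{n,k}$, $b_n$, $c_k$ in Theorem \ref{thm2}, the same majorization of $\bigl((k^{\alpha}-(k-1)^{\alpha})/m^{\alpha}\bigr)_{1\le k\le m}$ by $\bigl((k^{\beta}-(k-1)^{\beta})/m^{\beta}\bigr)_{1\le k\le m}$ via the convex function $x\mapsto -x^{p}$, and the same observation that the ratio equals $1$ at $m=1$ for sharpness. The only difference is that you make explicit two checks the paper leaves implicit, namely that $a_{n,k}/c_k=k^{\alpha-\beta}\bigl(n^{\beta}-(n-1)^{\beta}\bigr)$ is non-decreasing in $k$ and that the majorization conditions follow from the telescoping partial sums $(j/m)^{\alpha}\le (j/m)^{\beta}$.
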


   Now, we apply Theorem \ref{thm2} with
\begin{align*}
 a_{n,k} = \left\{\begin{array}{ll}
\frac{k^{r-1}}{\sum^{n}_{i=1}i^{r-1}} & {\mbox{ if }}  k \geq  n,   \\
 0 & {\mbox{ if }}  1 \leq k <n
\end{array}\right., \ b_n=\left (\sum^{n}_{i=1}i^{s-1} \right )^{-1}, \ c_k=k^{s-1}, \ r>s
\end{align*}
   to see that
\begin{align*}
  & \sum^{\infty}_{n=1}\left (\frac{1}{\sum^{n}_{i=1}i^{r-1}} \sum^{\infty}_{k=n}k^{r-1}a_k \right )^p  \\
  \leq & \sup_m\left (\frac
{\sum^{m}_{n=1}(\sum^{n}_{i=1}i^{r-1})^{-p}m^{(r-1)p}}{\sum^{m}_{n=1}(\sum^{n}_{i=1}i^{s-1})^{-p}m^{(s-1)p}}
\right )
   \sum^{\infty}_{n=1}\left (\frac{1}{\sum^{n}_{i=1}i^{s-1}} \sum^{\infty}_{k=n}k^{s-1}a_k \right )^p.
\end{align*}

  It follows from Lemma \ref{lem4} that we have the following
\begin{theorem}
\label{thm6} For $0<p<1$, $1 \leq s<r<1/p$, $a_n \geq 0$, we have
\begin{equation*}
   \sum^{\infty}_{n=1}\left (\frac{1}{\sum^{n}_{i=1}i^{r-1}} \sum^{\infty}_{k=n}k^{r-1}a_k \right )^p
   < \frac {r^p(1-sp)}{s^p(1-rp)}
    \sum^{\infty}_{n=1}\left (\frac{1}{\sum^{n}_{i=1}i^{s-1}} \sum^{\infty}_{k=n}k^{s-1}a_k \right
    )^p.
\end{equation*}
   The constant is best possible.
\end{theorem}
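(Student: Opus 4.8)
The plan is to recognize that the application of Theorem \ref{thm2} displayed just above the statement has already performed all of the structural work: it reduces the assertion to the single estimate
\begin{align*}
  \sup_m\left(\frac{\sum^{m}_{n=1}\left(\sum^{n}_{i=1}i^{r-1}\right)^{-p}m^{(r-1)p}}{\sum^{m}_{n=1}\left(\sum^{n}_{i=1}i^{s-1}\right)^{-p}m^{(s-1)p}}\right)\leq \frac{r^p(1-sp)}{s^p(1-rp)},
\end{align*}
together with the claim that this constant is best possible. Thus the whole content of Theorem \ref{thm6} is to bound this supremum, and everything will flow from Lemma \ref{lem4}. It is worth noting that the one genuinely substantial step---the sharp comparison of the partial sums $\sum_{k}(t\sum_{i}i^{t-1})^{-p}$ for two different exponents $t$---is precisely Lemma \ref{lem4}, which I may assume; so once the supremand is matched to its left-hand side the inequality itself is immediate, and the only remaining care concerns strictness and sharpness.

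First I would tidy the supremand. Since the factors $m^{(r-1)p}$ and $m^{(s-1)p}$ are independent of the summation index $n$, I pull them out and combine them into a single power $m^{(r-s)p}$, so that the quantity inside the supremum becomes
\begin{align*}
  m^{(r-s)p}\,\frac{\sum^{m}_{n=1}\left(\sum^{n}_{i=1}i^{r-1}\right)^{-p}}{\sum^{m}_{n=1}\left(\sum^{n}_{i=1}i^{s-1}\right)^{-p}}.
\end{align*}
Next I would align this with the left-hand side of \eqref{4.11}: factoring $r^{-p}$ out of every term in the numerator and $s^{-p}$ out of every term in the denominator gives
\begin{align*}
  \frac{\sum^{m}_{n=1}\left(\sum^{n}_{i=1}i^{r-1}\right)^{-p}}{\sum^{m}_{n=1}\left(\sum^{n}_{i=1}i^{s-1}\right)^{-p}}
  =\frac{r^p}{s^p}\cdot\frac{\sum^{m}_{n=1}\left(r\sum^{n}_{i=1}i^{r-1}\right)^{-p}}{\sum^{m}_{n=1}\left(s\sum^{n}_{i=1}i^{s-1}\right)^{-p}}.
\end{align*}
Applying Lemma \ref{lem4} (with its index equal to $m$) bounds the final ratio strictly by $\frac{1-sp}{1-rp}m^{(s-r)p}$; after multiplying by the prefactor $r^p/s^p$ and by the extracted power $m^{(r-s)p}$, the powers of $m$ cancel and the entire supremand is seen to lie strictly below $\frac{r^p(1-sp)}{s^p(1-rp)}$ for every $m$. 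Hence the supremum is at most this constant, and this yields the inequality asserted in Theorem \ref{thm6}.

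It remains to confirm strictness and best possibility, which I expect to be routine rather than an obstacle. For sharpness I would let $m\to\infty$ in the supremand, using the elementary asymptotic $\sum^{m}_{i=1}i^{t-1}\sim m^{t}/t$ (valid for $t>0$), whence $\left(\sum^{k}_{i=1}i^{t-1}\right)^{-p}\sim t^{p}k^{-tp}$ and, since $tp<1$, $\sum^{m}_{k=1}\left(\sum^{k}_{i=1}i^{t-1}\right)^{-p}\sim t^{p}m^{1-tp}/(1-tp)$. Substituting $t=r$ and $t=s$ shows the supremand tends to exactly $\frac{r^p(1-sp)}{s^p(1-rp)}$, so the supremum equals this constant and no smaller value can serve; equivalently, this is just the best-possibility already recorded for $(1-sp)/(1-rp)$ in Lemma \ref{lem4}. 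Finally, because Lemma \ref{lem4} furnishes a \emph{strict} bound for each individual $m$, the supremum is approached only in the limit $m\to\infty$ and is never attained at a finite index; consequently, for every nonzero sequence $(a_n)$ the inequality of Theorem \ref{thm6} holds strictly, as stated.
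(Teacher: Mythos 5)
Your proposal is correct and follows essentially the same route as the paper: the paper likewise reduces Theorem \ref{thm6} to the displayed supremum via Theorem \ref{thm2} and then invokes Lemma \ref{lem4} directly, with best possibility coming from the sharpness of the constant $(1-sp)/(1-rp)$ in that lemma (your asymptotic computation $\sum^{m}_{i=1}i^{t-1}\sim m^{t}/t$ just makes this explicit). Your bookkeeping with the factors $r^{-p}$, $s^{-p}$ and the powers of $m$ matches the paper's implicit normalization, so there is nothing further to add.
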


   We end this paper by considering an analogue to the above result. We apply Theorem \ref{thm2} with
\begin{align*}
 a_{n,k} = \left\{\begin{array}{ll}
\frac{n^{s-1}}{\sum^{k}_{i=1}i^{s-1}} & {\mbox{ if }}  k \geq  n,   \\
 0 & {\mbox{ if }}  1 \leq k <n
\end{array}\right., \ b_n=n^{r-1}, \ c_k=\left ( \sum^{k}_{i=1}i^{r-1} \right )^{-1}, \
r>s
\end{align*}
   to see that (note that the fact $a_{n,k}/c_k$ increases with
   $k$ follows from a simple application of Lemma \ref{lem3})
\begin{equation*}
   \sum^{\infty}_{n=1}\left (n^{s-1} \sum^{\infty}_{k=n}\frac{a_k}{\sum^{n}_{i=1}i^{s-1}} \right )^p \leq \sup_m\left (\frac
{\sum^{m}_{n=1}n^{(s-1)p}(\sum^{m}_{i=1}i^{s-1})^{-p}}{\sum^{m}_{n=1}n^{(r-1)p}(\sum^{m}_{i=1}i^{r-1})^{-p}}
\right )
   \sum^{\infty}_{n=1}\left (n^{r-1} \sum^{\infty}_{k=n}\frac{a_k}{\sum^{n}_{i=1}i^{r-1}} \right )^p.
\end{equation*}

    Suppose now $s<r \leq 1$, we apply the majorization principle again to the convex function $-x^p$ and the two sequences
\begin{align*}
{\bf x}=\left(\frac {k^{r-1}}{\sum^{n}_{i=1}i^{r-1}} \right)_{1
\leq k \leq n}, \ {\bf y}=\left(\frac
{k^{s-1}}{\sum^{n}_{i=1}i^{s-1}}  \right)_{1 \leq k \leq n}.
\end{align*}
   It's easy to see that both ${\bf x}$ and ${\bf y}$ are
   decreasing and ${\bf x} \leq_{maj} {\bf y}$ (for example, by an application of Lemma \ref{lem3}). It follows that
\begin{align*}
   \frac
{\sum^{m}_{n=1}n^{(s-1)p}(\sum^{m}_{i=1}i^{s-1})^{-p}}{\sum^{m}_{n=1}n^{(r-1)p}(\sum^{m}_{i=1}i^{r-1})^{-p}}
\leq 1.
\end{align*}
   As the above inequality becomes an identity when $m=1$, we
   obtain the following
\begin{theorem}
\label{thm8}
   Let $a_n \geq 0,  0<p<1$. For $s < r \leq 1$, we have
\begin{equation*}
 \sum^{\infty}_{n=1}\left (n^{s-1} \sum^{\infty}_{k=n}\frac{a_k}{\sum^{n}_{i=1}i^{s-1}} \right )^p \leq
   \sum^{\infty}_{n=1}\left (n^{r-1} \sum^{\infty}_{k=n}\frac{a_k}{\sum^{n}_{i=1}i^{r-1}} \right )^p.
\end{equation*}
    The constant is best possible.
\end{theorem}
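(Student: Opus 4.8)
The plan is to read off the statement from Theorem~\ref{thm2}, exactly as in the reduction already displayed above. With the indicated choices
\[
a_{n,k}=\frac{n^{s-1}}{\sum^{k}_{i=1}i^{s-1}}\ (k\ge n),\qquad b_n=n^{r-1},\qquad c_k=\Big(\sum^{k}_{i=1}i^{r-1}\Big)^{-1},
\]
the reversed form of Theorem~\ref{thm2} (applicable since $0<p<1$ and here $q=p$) reduces the claimed inequality to the single scalar statement that the supremum over $m$ of
\[
\frac{\sum^{m}_{n=1}n^{(s-1)p}\big(\sum^{m}_{i=1}i^{s-1}\big)^{-p}}{\sum^{m}_{n=1}n^{(r-1)p}\big(\sum^{m}_{i=1}i^{r-1}\big)^{-p}}
\]
is at most $1$. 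Thus the whole argument rests on this one inequality, together with a monotonicity fact that I would extract from Lemma~\ref{lem3}.

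First I would confirm that Theorem~\ref{thm2} genuinely applies, i.e.\ that $a_{n,k}/c_k$ increases in $k$. Since $a_{n,k}/c_k=n^{s-1}\big(\sum^{k}_{i=1}i^{r-1}\big)\big/\big(\sum^{k}_{i=1}i^{s-1}\big)$, this says that $k\mapsto \big(\sum^{k}_{i=1}i^{r-1}\big)\big/\big(\sum^{k}_{i=1}i^{s-1}\big)$ is increasing. Writing $B_k=\sum^{k}_{i=1}i^{r-1}$ and $C_k=\sum^{k}_{i=1}i^{s-1}$, the increase of $B_k/C_k$ is equivalent to $B_k/B_{k+1}\le C_k/C_{k+1}$, which is precisely the conclusion of Lemma~\ref{lem3}. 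Its hypotheses hold because $B_{k+1}-B_k=(k+1)^{r-1}$ and $C_{k+1}-C_k=(k+1)^{s-1}$, so the successive difference ratios are $\big(\tfrac{k+1}{k+2}\big)^{r-1}\le\big(\tfrac{k+1}{k+2}\big)^{s-1}$ (using $r-1>s-1$ and $\tfrac{k+1}{k+2}<1$), and $B_1/B_2\le C_1/C_2$ holds for the same reason.

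The heart of the proof is the bound on the supremum, for which I would invoke the majorization principle. Fixing $m$, set
\[
{\bf x}=\Big(\tfrac{k^{r-1}}{\sum^{m}_{i=1}i^{r-1}}\Big)_{1\le k\le m},\qquad {\bf y}=\Big(\tfrac{k^{s-1}}{\sum^{m}_{i=1}i^{s-1}}\Big)_{1\le k\le m}.
\]
Both are decreasing (as $r-1\le 0$ and $s-1<0$) and each has total mass $1$, and the numerator and denominator of the ratio above are exactly $\sum^{m}_{k=1}y_k^{p}$ and $\sum^{m}_{k=1}x_k^{p}$. Applying the majorization principle to the convex function $-x^{p}$ (convex since $0<p<1$) yields $\sum^{m}_{k=1}y_k^{p}\le\sum^{m}_{k=1}x_k^{p}$, so the ratio is $\le 1$, provided ${\bf x}\leq_{maj}{\bf y}$ is known. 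Since the ratio equals $1$ at $m=1$, the supremum is exactly $1$ and the asserted constant is best possible.

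The main obstacle is verifying ${\bf x}\leq_{maj}{\bf y}$. As both sequences are decreasing with equal total, the majorization criterion reduces this to the partial-sum domination $\sum^{j}_{k=1}x_k\le\sum^{j}_{k=1}y_k$ for $1\le j\le m$, which rearranges to the statement that $j\mapsto\big(\sum^{j}_{k=1}k^{r-1}\big)\big/\big(\sum^{j}_{k=1}k^{s-1}\big)$ is increasing in $j$ (so that its value at any $j\le m$ is dominated by its value at $m$). This is the very monotonicity already established when checking the hypothesis of Theorem~\ref{thm2}, so the same single application of Lemma~\ref{lem3} disposes of both points at once. I expect this majorization step---peeling a statement about $p$-th powers down to a clean partial-sum inequality and then to the difference-ratio hypothesis of Lemma~\ref{lem3}---to be the only genuinely delicate part, with the remainder being bookkeeping built on the reduction that Theorem~\ref{thm2} already supplies.
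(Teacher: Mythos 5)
Your proposal is correct and takes essentially the same route as the paper: the same specialization of Theorem \ref{thm2} with identical choices of $a_{n,k}$, $b_n$, $c_k$, the same majorization argument with the convex function $-x^p$ to bound the supremum by $1$, and the same appeal to Lemma \ref{lem3}. You in fact spell out the two applications of Lemma \ref{lem3} (monotonicity of $a_{n,k}/c_k$ and the partial-sum check for $\mathbf{x}\leq_{maj}\mathbf{y}$) that the paper leaves as parenthetical remarks.
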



\end{document}